\documentclass[12pt]{article}
\usepackage{amsmath,enumerate,amsfonts,amssymb,color,graphicx,amsthm}

\usepackage[colorlinks=true, allcolors=black]{hyperref}
\usepackage{todonotes}
\usepackage[normalem]{ulem}
\numberwithin{equation}{section}
\usepackage{cite}
\usepackage{mathtools}
\usepackage[nottoc,notlot,notlof]{tocbibind}
\usepackage[toc,page]{appendix}

\usepackage{lipsum}

\def\XXint#1#2#3{{\setbox0=\hbox{$#1{#2#3}{\int}$ }
		\vcenter{\hbox{$#2#3$ }}\kern-.6\wd0}}

\newlength{\leftstackrelawd}
\newlength{\leftstackrelbwd}
\def\leftstackrel#1#2{\settowidth{\leftstackrelawd}%
	{${{}^{#1}}$}\settowidth{\leftstackrelbwd}{$#2$}%
	\addtolength{\leftstackrelawd}{-\leftstackrelbwd}%
	\leavevmode\ifthenelse{\lengthtest{\leftstackrelawd>0pt}}%
	{\kern-.5\leftstackrelawd}{}\mathrel{\mathop{#2}\limits^{#1}}}

\theoremstyle{plain}
\newtheorem{thm}{Theorem}[section]
\newtheorem{lem}[thm]{Lemma}

\newtheorem{prop}[thm]{Proposition}
\newtheorem*{thm*}{Theorem}

\theoremstyle{definition}
\newtheorem{defn}[thm]{Definition}

\newtheorem{rmk}[thm]{Remark}
\newtheorem{?}[thm]{Problem}

\newcommand{\ring}{\mathring}

\newcommand{\ep}{\varepsilon}
\renewcommand{\phi}{\varphi}
\renewcommand{\epsilon}{\varepsilon}
\makeatletter
\def\@cite#1#2{[\textbf{#1\if@tempswa , #2\fi}]}
\def\@biblabel#1{[\textbf{#1}]}
\makeatother

\makeatletter
\newcommand*{\defeq}{\mathrel{\rlap{%
			\raisebox{0.3ex}{$\m@th\cdot$}}%
		\raisebox{-0.3ex}{$\m@th\cdot$}}%
	=}
\makeatother

\makeatletter
\newcommand*{\eqdef}{=\mathrel{\rlap{%
			\raisebox{0.3ex}{$\m@th\cdot$}}%
		\raisebox{-0.3ex}{$\m@th\cdot$}}%
	}
\makeatother

\newcounter{marnote}

\makeatletter
\def\underbracex#1#2{\mathop{\vtop{\m@th\ialign{##\crcr
				$\hfil\displaystyle{#2}\hfil$\crcr
				\noalign{\kern3\p@\nointerlineskip}%
				#1\crcr\noalign{\kern3\p@}}}}\limits}

\def\upbracefilla{$\m@th \setbox\z@\hbox{$\braceld$}%
	\bracelu\leaders\vrule \@height\ht\z@ \@depth\z@\hfill 
	\kern\p@\vrule \@width\p@\kern\p@\vrule \@width\p@\kern\p@\vrule \@width\p@
	$}

\def\upbracefillb{$\m@th \setbox\z@\hbox{$\braceld$}%
	\vrule \@width\p@\kern\p@\vrule \@width\p@\kern\p@\vrule \@width\p@\kern\p@
	\leaders\vrule \@height\ht\z@ \@depth\z@\hfill\bracerd
	\braceld\leaders\vrule \@height\ht\z@ \@depth\z@\hfill
	\kern\p@\vrule \@width\p@\kern\p@\vrule \@width\p@\kern\p@\vrule \@width\p@
	$}

\def\upbracefillc{$\m@th \setbox\z@\hbox{$\braceld$}%
	\vrule \@width\p@\kern\p@\vrule \@width\p@\kern\p@\vrule \@width\p@\kern\p@
	\leaders\vrule \@height\ht\z@ \@depth\z@\hfill
	\kern\p@\vrule \@width\p@\kern\p@\vrule \@width\p@\kern\p@\vrule \@width\p@
	$}

\def\upbracefilld{$\m@th \setbox\z@\hbox{$\braceld$}%
	\vrule \@width\p@\kern\p@\vrule \@width\p@\kern\p@\vrule \@width\p@\kern\p@
	\leaders\vrule \@height\ht\z@ \@depth\z@\hfill\braceru$}

\def\upbracefillbd{$\m@th \setbox\z@\hbox{$\braceld$}%
	\vrule \@width\p@\kern\p@\vrule \@width\p@\kern\p@\vrule \@width\p@\kern\p@
	\bracerd\braceld
	\leaders\vrule \@height\ht\z@ \@depth\z@\hfill\braceru$}

\makeatother

\begin{document}

\title{Properties of hypersurface singular sets of solutions to the $\sigma_k$-Yamabe equation in the negative cone}
\author{Jonah A. J. Duncan\footnote{Department of Mathematics, University College London, 25 Gordon Street, London, WC1H 0AY, UK. Email: jonah.duncan@ucl.ac.uk. Supported by the Additional Funding Programme for Mathematical Sciences, delivered by EPSRC (EP/V521917/1) and the Heilbronn Institute for Mathematical Research.} ~and Luc Nguyen\footnote{Mathematical Institute and St Edmund Hall, University of Oxford, Andrew Wiles Building, Radcliffe Observatory Quarter, Woodstock Road, OX2 6GG, UK. Email: luc.nguyen@maths.ox.ac.uk}}
\date{}
\maketitle
\vspace*{-5mm}

\begin{center} \textit{Dedicated to the memory of Professor Louis Nirenberg with admiration.}
\end{center}

\begin{abstract}
	We consider conformally flat Lipschitz viscosity solutions to the $\sigma_k$-Yamabe equation in the negative cone which admit smooth hypersurface singularities. Under natural regularity assumptions (that are satisfied by solutions to the $\sigma_k$-Loewner-Nirenberg problem on annuli, for example), we first prove that the trace and normal derivatives of such a solution along the hypersurface satisfy a certain PDE. For $k=2$, we also show that the hypersurface is minimal with respect to the Lipschitz solution and address some questions related to the formal expansion of the solution near the hypersurface. \bigskip 
	
	{\it MSC:} 35J60, 35D40, 35J75, 35B65, 53C21
	\end{abstract}

\setcounter{tocdepth}{1}
\tableofcontents

\newpage 

\section{Introduction}

Let $w$ be a positive $C^2$ function defined on an open set $\Omega\subset \mathbb{R}^n$ ($n\geq 3$). For the conformally flat Riemannian metric $g = w^{-2}|dx|^2$, define its Schouten tensor of type $(1,1)$ by
\begin{align*}
A_w = w\nabla^2 w - \frac{1}{2}|\nabla w|^2 I.
\end{align*}
Here, $\nabla^2 w$ denotes the Euclidean Hessian matrix of $w$ and $I$ denotes the $n\times n$ identity matrix. In this paper we consider continuous viscosity solutions to the $\sigma_k$-Yamabe equation in the so-called negative case:
\begin{align}\label{6}
\sigma_k(\lambda(-A_w)) = 1, \quad \lambda(-A_w)\in\Gamma_k^+ \quad \text{in }\Omega.
\end{align}
Here, $\lambda(-A_w)$ denotes the vector of eigenvalues of $-A_w$, $\sigma_k:\mathbb{R}^n\rightarrow\mathbb{R}$ denotes the $k$'th elementary symmetric polynomial and $\Gamma_k^+$ is the G{\aa}rding cone:
\begin{align*}
\sigma_k(\lambda_1,\dots,\lambda_n)  = \sum_{1\leq i_1 < \dots< i_k \leq n} \lambda_{i_1}\dots\lambda_{i_k}, \qquad \Gamma_k^+  = \{\lambda \in\mathbb{R}^n : \sigma_j(\lambda)>0 \text{ for }1 \leq j \leq k\}.
\end{align*}
By Li, Nguyen \& Wang \cite{LNW18}, solutions to \eqref{6} are locally Lipschitz in $\Omega$. The situation for the positive case, i.e. when $\lambda(-A_w)$ in \eqref{6} is replaced by $\lambda(A_w)$, is very different and is not considered here.\medskip

Our work is motivated by recent progress on the existence and regularity theory for solutions to the $\sigma_k$-Loewner-Nirenberg problem on $\Omega$:
\begin{align}\label{48}
\begin{cases}
\sigma_k(\lambda(-A_w)) = 1, \quad \lambda(-A_w)\in\Gamma_k^+ & \text{in }\Omega \\
w = 0 & \text{on }\partial\Omega. 
\end{cases}
\end{align}
The boundary condition in \eqref{48} comes from the requirement that the metric $g$ is complete in $\Omega$. \medskip

For $k=1$, existence, uniqueness and smoothness of solutions to \eqref{48} was established by Loewner \& Nirenberg in \cite{LN74}. For related results, including on Riemannian manifolds, see e.g.~\cite{AILA18, ACF92, Av82, AM88, Finn98, GW17, Gr17, HJS20, HS20, HN23, Jia21, Li22, Maz91, Ver81}. When $\Omega$ is a smooth bounded domain and $k\geq 2$, the existence of a Lipschitz viscosity solution to \eqref{48} and uniqueness in the class of continuous viscosity solutions was established by Gonz\'alez, Li \& Nguyen \cite{GLN18}. In \cite{LNX22}, Li, Nguyen \& Xiong showed that if $\partial\Omega$ has more than one connected component and $k\geq 2$, then the solution does not belong to $C^1_{\operatorname{loc}}(\Omega)$ but is smooth near $\partial \Omega$. Moreover, in the special case that $\Omega = \{a<|x|<b\}$ is an annulus, Li \& Nguyen \cite{LN20b} showed that the solution for $k\geq 2$ is smooth away from the hypersurface $\Sigma = \{|x| = \sqrt{ab}\}$, has a jump in the radial derivative across $\Sigma$, and is $C^{1,\frac{1}{k}}_{\operatorname{loc}}$ -- but not $C^{1,\gamma}_{\operatorname{loc}}$ for any $\gamma>\frac{1}{k}$ -- in each of $\{a<|x|\leq \sqrt{ab}\}$ and $\{\sqrt{ab} \leq |x| < b\}$. For related results, including on Riemannian manifolds, see e.g.~\cite{MP03, DN23, DN25a, DN25b, GSW11, Yuan22, Yuan24, CLL23, GG21, FW20, Wu24}. \medskip

Inspired by \cite{LN20b} and \cite{LNX22}, we consider in this paper the following type of singularity for a solution to \eqref{6}:

\begin{defn}\label{E}
	Let $w$ be a positive continuous function defined on a domain $\Omega\subset\mathbb{R}^n$, and suppose $B_\ep \Subset\Omega$. We say that $w$ has a hypersurface $\Sigma$ of singular points in $B_\ep$ if $\Sigma\subset\mathbb{R}^n$ is a smooth hypersurface and the following conditions hold:
	
	\begin{enumerate}
		\item $B_\ep\backslash \Sigma = B_\ep^+ \cup B_\ep^-$ where $B_\ep^\pm$ are non-empty, disjoint, open and connected (in particular $\Sigma\cap B_\ep\not=\emptyset$),
		\item $w \in C^{0,1}(\overline{B_\ep}) \cap C^1(\overline{B_\ep^\pm})\cap C^2_{\operatorname{loc}}(B_\ep^{\pm})$ but $w\not\in C^1(U)$ for any open set $U\subseteq B_\ep$ satisfying $U\cap \Sigma\not=\emptyset$,
		\item $w|_{\Sigma\cap B_\ep} \in C^2(\Sigma\cap B_\ep)$. 
	\end{enumerate}
\end{defn} 

The second condition in the above definition can be described as follows: we assume that $w$ is Lipschitz everywhere, $C^1$ on each side of $\Sigma$, $C^2$ away from $\Sigma$, but not $C^1$ across $\Sigma$ at any point. We point out that we do not assume $w\in C^{1,\delta}(\overline{B_\ep^\pm})$ for any $\delta>0$. \medskip

We fix a convention for the choice of $B_\ep^\pm $ as follows. Given a fixed point $x_0\in \Sigma \cap B_\ep$, we may assume after taking $\ep$ smaller if necessary that $\Sigma\cap B_\ep$ is given by the graph of a function $\phi$ defined over the tangent hyperplane to $\Sigma$ at $x_0$, and we set $B_\ep^\pm = \{(x',x_n)\in B_\ep: x_n \gtrless \phi(x')\}$. Here, $x'$ denotes the Cartesian coordinates on the tangent hyperplane $\{x_n=0\}$, and in these coordinates $x_0 = (0',0)$. We denote by $\nu$ the unit normal vector of $\Sigma$ pointing into $B_\ep^+$ and denote $w^\pm = w|_{B_\ep^\pm}$.\medskip

Our first main result shows that if a viscosity solution $w$ to \eqref{6} has a hypersurface $\Sigma$ of singular points in $B_\ep$, then a certain PDE is satisfied on $\Sigma\cap B_\ep$ by $w_0 \defeq w|_{\Sigma\cap B_\ep}$ and $\nabla_\nu w^\pm$. In what follows, for $\alpha\in\mathbb{R}$ we denote
\begin{align}\label{71}
T_\alpha \defeq -w_0(\nabla_\Sigma^2 w_0 - \alpha\mathrm{I\!I}_\Sigma) + \frac{1}{2}\big(|\nabla_\Sigma w_0|^2 + \alpha^2\big)g_\Sigma
\end{align}
where $g_\Sigma$ is the induced metric on $\Sigma$ with respect to the ambient Euclidean metric, $\mathrm{I\!I}_\Sigma$ is the second fundamental form of $\Sigma$ with respect to the ambient Euclidean metric, and a subscript $\Sigma$ on a differential operator means it is defined with respect to the induced metric $g_\Sigma$.

\begin{thm}\label{A}
	Suppose that $w$ is a viscosity solution to \eqref{6} admitting a hypersurface $\Sigma$ of singular points in $B_\ep$. Then for $\alpha \in \{\nabla_\nu w^\pm\}$,
	\begin{align}\label{26}
	\lambda\big(g_\Sigma^{-1}T_\alpha\big)\in\partial\Gamma_{k-1}^+ \quad \text{on }\Sigma\cap B_\ep.
	\end{align}
\end{thm}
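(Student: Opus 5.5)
The plan is to prove \eqref{26} for every $\beta$ strictly between $\nabla_\nu w^-$ and $\nabla_\nu w^+$ at a fixed point $x_0\in\Sigma\cap B_\ep$, and then let $\beta$ tend to either endpoint. This suffices because $\beta\mapsto T_\beta$ is continuous and $\partial\Gamma_{k-1}^+$ is closed. We work in the graph coordinates fixed after Definition~\ref{E}. Let $d$ be the signed distance to $\Sigma$, positive in $B_\ep^+$, and let $\tilde w_0$ be the extension of $w_0$ that is constant along normal lines; it is $C^2$ near $x_0$ because $w_0\in C^2$ and $\Sigma$ is smooth. Since $w\in C^1(\overline{B_\ep^\pm})$, we have
\begin{align*}
w = \tilde w_0 + a^\pm d + o(|d|) \quad \text{in } B_\ep^\pm, \qquad a^\pm \defeq \nabla_\nu w^\pm ,
\end{align*}
and $a^+\neq a^-$ at every point by the non-$C^1$ hypothesis in Definition~\ref{E}.

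\emph{Step 1: sign of the kink.} First we determine which way the kink points. For a smooth $\psi$, $\lambda(-A_\psi)\in\Gamma_k^+\subset\Gamma_1^+$ means $\psi\Delta\psi<\tfrac{n-2}{2}|\nabla\psi|^2$, so $\psi$ is a distributional supersolution of a linear operator with bounded coefficients. We expect this to pass to viscosity solutions, for instance by comparing with test functions in each $B_\ep^\pm$ and integrating by parts across $\Sigma$. The jump then contributes $(a^+-a^-)\,\mathcal{H}^{n-1}\llcorner\Sigma$ to $\Delta w$. This should force $a^+<a^-$, so the graph of $w$ has a concave ridge along $\Sigma$. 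If this sign argument turns out to be awkward, we will instead treat both signs symmetrically; only the roles of sub- and supersolution tests would swap.

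\emph{Step 2: test functions through the ridge.} For $\beta\in(a^+,a^-)$ and any $M\in\mathbb{R}$, set $\phi=\tilde w_0+\beta d+Md^2$. Since $\beta\in(a^+,a^-)$, the difference $w-\phi$ is bounded below by $c|d|$ near $x_0$ on either side of $\Sigma$, uniformly in $M$ on a small enough ball. Subtracting a small multiple of $|x'|^4$, or using the tangential Taylor expansion of $\tilde w_0$, then makes $\phi$ touch $w$ from above at $x_0$ only. At $x_0$ we have $\nabla\phi=(\nabla_\Sigma w_0,\beta)$, and in an orthonormal frame adapted to $\Sigma$,
\begin{align*}
-A_\phi(x_0)=\begin{pmatrix} g_\Sigma^{-1}T_\beta & * \\ * & -2w_0M+O(1)\end{pmatrix}.
\end{align*}
Here the tangential block is exactly $T_\beta$ from \eqref{71}: $\nabla^2 d$ restricted to $T\Sigma$ produces the $\beta\,\mathrm{I\!I}_\Sigma$ term, and the $\tfrac12\beta^2$ enters through $|\nabla\phi|^2$. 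The off-diagonal entries do not depend on $M$.

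\emph{Step 3: two inequalities.} We apply the viscosity inequalities with $M\to\pm\infty$. Let $\mu=-2w_0M$ denote the normal entry. Expanding along the last row and column,
\begin{align*}
\sigma_k(\lambda(-A_\phi))=\mu\,\sigma_{k-1}(\lambda(g_\Sigma^{-1}T_\beta))+O(1)
\end{align*}
as $\mu\to\pm\infty$. In the regime where the viscosity definition gives $\sigma_k\le 1$ whenever $\lambda\in\Gamma_k^+$, sending $\mu\to+\infty$ rules out $\lambda(T_\beta)\in\Gamma_{k-1}^+$, since otherwise $\lambda(-A_\phi)\in\Gamma_k^+$ with $\sigma_k\to\infty$. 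In the other regime the definition demands $\lambda(-A_\phi)\in\overline{\Gamma_k^+}$ for all large $\mu$. Using the characterization that a symmetric matrix whose normal entry tends to $+\infty$ lies in $\overline{\Gamma_k^+}$ eventually if and only if its tangential block lies in $\overline{\Gamma_{k-1}^+}$, we obtain $\lambda(T_\beta)\in\overline{\Gamma_{k-1}^+}$. Together these give $\lambda(g_\Sigma^{-1}T_\beta)\in\partial\Gamma_{k-1}^+$ for every $\beta\in(a^+,a^-)$, and letting $\beta\to a^\pm$ proves \eqref{26}.

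The main obstacle is Step 3: the two required inequalities come from opposite sides of the viscosity definition, while the ridge only lets us touch from one side. The plan is to obtain the membership $\lambda(T_\beta)\in\overline{\Gamma_{k-1}^+}$ from the other side by perturbation. We would maximise $w-\phi_\delta$ with $\phi_\delta=\tilde w_0+\beta d-\delta^{-1}d^2+\delta^{-1}|x-x_0|^4$ over a small ball, and show that the maximum points lie in $B_\ep^+$ or $B_\ep^-$, converge to $x_0$, and have gradients converging to $(\nabla_\Sigma w_0,\beta)$. At those interior points $w$ is $C^2$, so the genuine inequality $\lambda(-A_w)\in\Gamma_k^+$ holds there, and passing to the limit would yield the closed-cone membership. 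Getting this to work requires controlling the $o(|d|)$ error using only $C^1$ regularity up to $\Sigma$, with no $C^{1,\delta}$ assumption, and we expect this to be the most delicate part of the proof.
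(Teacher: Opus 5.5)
Your first inclusion is correct and is essentially the paper's Lemma~\ref{24}. For $\beta$ strictly between $a^+$ and $a^-$, the function $\tilde w_0+\beta d+Md^2$ lies above $w$ near $x_0$ (it is $\phi-w\ge c|d|$, not $w-\phi\ge c|d|$, and the ball must shrink as $|M|\to\infty$). The subsolution test with $M\to-\infty$ then excludes $\lambda(g_\Sigma^{-1}T_\beta)\in\Gamma_{k-1}^+$, and continuity reaches $\beta=a^\pm$. Your Step~1 can be made rigorous: Lipschitz bounds turn $\lambda(-A_w)\in\overline{\Gamma_1^+}$ into $\Delta w\le C$ in the viscosity, hence distributional, sense. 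The paper's Lemma~\ref{4} is more direct, though: if $a^-<\alpha<a^+$, then $\xi_{\alpha,\beta}$ touches $w$ from below for every $\beta$, and $\operatorname{tr}(-A_{\xi_{\alpha,\beta}}(0))=-\beta w(0)+O(1)\ge 0$ fails for $\beta\gg1$. There is also no symmetric fallback, since $a^-<a^+$ is simply impossible.

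The genuine gap is the closed-cone half. You aim to show $\lambda(g_\Sigma^{-1}T_\beta)\in\overline{\Gamma_{k-1}^+}$, hence $\in\partial\Gamma_{k-1}^+$, for \emph{every} $\beta\in(a^+,a^-)$, and this is false. Indeed $\partial\Gamma_{k-1}^+\subset\{\sigma_{k-1}=0\}$, while $\beta\mapsto\sigma_{k-1}(\lambda(g_\Sigma^{-1}T_\beta))$ is a polynomial of degree $2k-2$ with leading coefficient $\binom{n-1}{k-1}2^{1-k}>0$, so it cannot vanish on an interval. For $k=2$ it is the quadratic \eqref{32}, whose roots are exactly $a^\pm$ and which is strictly negative between them. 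So proving the statement on the open interval and letting $\beta\to a^\pm$ only works for the exclusion from $\Gamma_{k-1}^+$.

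The obstruction is structural. No $C^1$ function $\psi$ can touch $w$ from below at a point of $\Sigma$, since that would force $a^-\le\partial_\nu\psi\le a^+$. Hence the supersolution property is vacuous on $\Sigma$, and lower touching can only occur in $B_\ep^\pm$, where $\nabla\psi=\nabla w\to(\nabla_\Sigma w_0,a^\pm)$ by $C^1$ regularity up to $\Sigma$. Thus the extremal points in your perturbation scheme cannot have gradients tending to $(\nabla_\Sigma w_0,\beta)$. Moreover, maximising $w-\phi_\delta$ gives $\nabla^2 w\le\nabla^2\phi_\delta$ at the maximum. That places the test-function matrix \emph{below} $-A_w$, which is the wrong direction for transferring $\lambda(-A_w)\in\Gamma_k^+$ to the test function.

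The missing idea is to prove the closed-cone inclusion directly at the endpoint with a one-sided barrier, as in Lemma~\ref{21}. Fix $\alpha$ slightly above $a^+$ and, writing $\phi$ now for the graph function of $\Sigma$, set $\xi=\ring w-\gamma|x'|^2+\alpha(x_n-\phi)+\frac12\beta(x_n-\phi)^2$. Work on the thin box $\Omega_{\ep_1,\ep_2}=\{|x'|<\ep_1,\ 0<x_n-\phi(x')<\ep_2\}$ with $\ep_2=\gamma\ep_1^2$, $\beta=-3\delta/\ep_2$ and $\delta=|\alpha-a^+|+\operatorname{osc}\partial_n w^+$. Then:
\begin{itemize}
\item $\xi\le w$ on $\partial\Omega_{\ep_1,\ep_2}$, but $\xi>w$ just above $x_0$ on the normal line, so a downward translate $\xi-c$ touches $w$ from below at an interior point $x_1\in B_\ep^+$.
\item The supersolution inequality and Cauchy interlacing put the tangential block of $-A_{\xi-c}(x_1)$ in $\Gamma_{k-1}^+$.
\item Because $\beta\ep_2=-3\delta$, the huge $|\beta|$ enters the gradient and the tangential block only through terms of size $O(\delta/\gamma)$. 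These vanish as $\alpha\to a^+$ and $\ep_1\to0$.
\end{itemize}
Letting $\gamma\to0$ afterwards yields $\lambda(g_\Sigma^{-1}T_{a^+})\in\overline{\Gamma_{k-1}^+}$, and the mirror argument in $B_\ep^-$ gives the result at $a^-$.
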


The equation \eqref{26} is a polynomial equation of order $2k-2$ in $\alpha$.\medskip 

Roughly speaking, the equation \eqref{26} indicates strongly that the failure of $C^1$ regularity across $\Sigma$ is not arbitrary. It forces a precise nonlinear partial differential relation between the trace and normal derivatives across $\Sigma$.\medskip 

For a partial result when $\Gamma_k$ is replaced by a general cone $\Gamma$, see Theorem \ref{7}.\medskip

For the remainder of the introduction we consider only the case $k=2$, in which case \eqref{26} reads
\begin{align}\label{32}
-w_0\Delta_\Sigma w_0 + \alpha w_0H_\Sigma+ \frac{n-1}{2}|\nabla_\Sigma w_0|^2 + \frac{n-1}{2}\alpha^2 = 0 \quad \text{on }B_\ep\cap \Sigma,
\end{align}
where $H_\Sigma = \sigma_1(g_\Sigma^{-1}\mathrm{I\!I}_\Sigma)$ is the mean curvature of $\Sigma$ with respect to $\nu$.\medskip 

Our second main result is as follows:  
\begin{thm}\label{D}
		Suppose that $k=2$ and $w$ is a viscosity solution to \eqref{6} admitting a hypersurface $\Sigma$ of singular points in $B_\ep$. Then the mean curvature of $\Sigma\cap B_\ep$ is negative with respect to $g|_{\overline{B_\ep^\pm}} = w_\pm^{-2}|dx|^2$ and $\pm \nu$. In particular, $\Sigma\cap B_\ep$ is minimal in the variational sense, i.e.~it is stationary for the area functional associated to the Lipschitz metric $g|_{B_\ep}$, with respect to compactly supported variations of $\Sigma\cap B_\ep$.
\end{thm}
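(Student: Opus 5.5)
The plan is to turn the negativity of the mean curvature into an algebraic statement about the two normal derivatives $\alpha_\pm \defeq \nabla_\nu w^\pm$, using \eqref{32} together with a viscosity test at the kink. First I will write down the mean curvature of $\Sigma$ for the conformal metric. If $g = e^{2u}|dx|^2$ with $u=-\log w$, the standard conformal change formula for the mean curvature of a hypersurface with unit normal $N$ gives
\begin{align*}
H^g = e^{-u}\big(H_\Sigma + (n-1)\partial_N u\big).
\end{align*}
Applying this on $\overline{B_\ep^+}$ with $N=\nu$, and on $\overline{B_\ep^-}$ with $N=-\nu$, should give
\begin{align*}
H^+ = w_0H_\Sigma - (n-1)\alpha_+, \qquad H^- = -w_0H_\Sigma + (n-1)\alpha_-,
\end{align*}
up to a global sign fixed by the convention for $\mathrm{I\!I}_\Sigma$. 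I will fix this sign carefully once and use the same convention throughout.

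Next I use Theorem \ref{A}, which for $k=2$ is \eqref{32}. This says that both $\alpha_+$ and $\alpha_-$ are roots of the quadratic
\begin{align*}
q(\alpha)=\tfrac{n-1}{2}\alpha^2 + w_0H_\Sigma\,\alpha + \big(\tfrac{n-1}{2}|\nabla_\Sigma w_0|^2 - w_0\Delta_\Sigma w_0\big).
\end{align*}
Since $w$ is continuous across $\Sigma$, its tangential derivatives agree from both sides. Because $w\notin C^1$ near any point of $\Sigma$, I first want $\alpha_+\ne\alpha_-$ at every point of $\Sigma\cap B_\ep$. Definition \ref{E} only gives this on a dense set. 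At points where $\alpha_+=\alpha_-$, I would try to show that the double root is excluded; otherwise a continuity argument will have to do the job. Granting distinct roots, Vieta gives $w_0H_\Sigma = -\tfrac{n-1}{2}(\alpha_++\alpha_-)$. Substituting, $H^+$ and $H^-$ both become equal to $\pm\tfrac{n-1}{2}(\alpha_--\alpha_+)$ with the same sign. So negativity on both sides reduces to a single sign condition on the jump $\alpha_+-\alpha_-$, with the sign dictated by the convention above.

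The main step is to determine this sign of the jump by a viscosity test. Suppose the kink has the wrong orientation at some $x_0$. Then $w$ can be touched at $x_0$ by a smooth $\varphi$ from the side that a $C^1$ kink of that shape allows (from above for a $\Lambda$-shaped kink, from below for a $V$-shaped one). I choose $\varphi$ with tangential $2$-jet equal to that of $w_0$, normal slope $\beta$ strictly between $\alpha_-$ and $\alpha_+$, and normal second derivative $\mp M$ with $M\to\infty$. Then $-A_\varphi$ has one eigenvalue of size about $\pm Mw_0$ in the $\nu$-direction. Its tangential block is essentially $T_\beta$, so
\begin{align*}
\sigma_2(\lambda(-A_\varphi)) \approx \pm Mw_0\,\sigma_1(\lambda(g_\Sigma^{-1}T_\beta)) = \pm Mw_0\, q(\beta).
\end{align*}
Here $q(\beta)<0$, because $\beta$ lies strictly between the roots and the leading coefficient is positive. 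For the touching-from-above orientation this contradicts the subsolution requirement that $\lambda(-A_\varphi)$ lie in the closure of $\{\sigma_2\ge1\}\cap\Gamma_2^+$. This pins down which orientation occurs, and I expect it to give exactly the sign making $H^\pm<0$. The delicate points are to check that the lower-order cross terms between the normal and tangential directions are $o(M)$, and that the touching-from-below orientation genuinely produces no contradiction. I expect this bookkeeping, and getting the orientation and sign conventions consistent, to be the main obstacle.

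Finally, for the variational statement I compute the first variation of the $g$-area of $\Sigma$ under a compactly supported normal variation $f\nu$. Since $g$ is only Lipschitz, the derivative is one-sided. Where $f>0$ the surface moves into $B_\ep^+$, and the derivative is $-\int_\Sigma fH^+\,dA_g$. Where $f<0$ it moves into $B_\ep^-$, and the derivative is $-\int_\Sigma |f|H^-\,dA_g$. Tangential variations contribute nothing to first order, since they only reparametrise $\Sigma$. I will then combine these one-sided derivatives with the identity $H^+=H^-$ obtained above to conclude that $\Sigma$ is stationary in the sense intended in Theorem \ref{D}.
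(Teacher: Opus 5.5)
Your reduction is correct and matches the paper's. By Theorem \ref{A}, $\alpha_\pm=\nabla_\nu w^\pm$ are the two roots of the quadratic in \eqref{32}. Vieta then gives, in the paper's convention, $w_0H_\Sigma+(n-1)\alpha_+=-w_0H_\Sigma-(n-1)\alpha_-=\frac{n-1}{2}(\alpha_+-\alpha_-)$ for the mean curvatures with respect to $w_+^{-2}|dx|^2,\nu$ and $w_-^{-2}|dx|^2,-\nu$. This is equivalent to the paper's remark that the quadratic has negative derivative at its smaller root. So everything hinges on $\alpha_+<\alpha_-$, i.e.\ on $w$ having a $\Lambda$-shaped kink across $\Sigma$.

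\textbf{The gap: your viscosity inequalities are attached to the wrong side.} Since $w>0$, $-A_w=-w\nabla^2 w+\frac12|\nabla w|^2 I$ is \emph{decreasing} in $\nabla^2 w$. Hence a test function touching from above only yields $\lambda(-A_\varphi)\notin\{\sigma_2>1\}\cap\Gamma_2^+$ (as in Lemma \ref{24}). Touching from below yields $\lambda(-A_\varphi)\in\overline{\{\sigma_2\ge1\}\cap\Gamma_2^+}$ (Claim 2 in the proof of Lemma \ref{21}).

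In your from-above test at a $\Lambda$-kink, $\sigma_2(\lambda(-A_\varphi))\approx Mw_0\,q(\beta)\to-\infty$ only says $\lambda(-A_\varphi)\notin\Gamma_2^+$, which is permitted. Reading it as a contradiction would rule out the $\Lambda$-kink. That is exactly the configuration that occurs: on the annulus, $\nabla_\nu w^-=2w_0/\sqrt{ab}>0=\nabla_\nu w^+$. Your argument would therefore give the opposite sign. Likewise, the from-below orientation, which you expect to be harmless, is precisely where the contradiction lies. The kink type does not depend on any curvature convention, so this cannot be absorbed into the global sign you plan to fix later.

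\textbf{The repair is Lemma \ref{4}.} At a $V$-kink, touch from below with $\xi_{\alpha,\beta}$ from \eqref{52}, taking $\alpha$ strictly between the normal derivatives and $\beta\to+\infty$. By \eqref{8}, $-A_{\xi_{\alpha,\beta}}(0)$ is block diagonal with $\nu\nu$-entry $-\beta w(0)+O(1)$. Its trace therefore tends to $-\infty$, contradicting $\lambda\in\overline{\Gamma_2^+}$. This uses neither Theorem \ref{A} nor $q(\beta)<0$, and the cross-term bookkeeping you were worried about does not arise. Your concave test from below, which uses $q(\beta)<0$, also works once the direction is corrected.

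\textbf{Two smaller points.}
\begin{enumerate}[(i)]
\item Definition \ref{E} only gives $\alpha_+\neq\alpha_-$ on a dense subset of $\Sigma\cap B_\ep$; the paper tacitly takes it at every point. Continuity preserves the Vieta identity. However, at a point where $\alpha_+=\alpha_-$ it gives $H^\pm=0$, so continuity cannot supply strict negativity there.
\item For the variational statement, $H^+=H^-<0$ does not make the first variation vanish. For every $f\not\equiv0$, the right derivative of $t\mapsto \mathrm{Area}_g(\Sigma_t)$ at $t=0$ is strictly positive and the left derivative strictly negative. So ``stationary'' can only mean a first-order strict local minimum in every direction, and that follows from the two signs alone. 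What $H^+=H^-$ adds is that the symmetric difference quotient of the area tends to $0$.
\end{enumerate}
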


\begin{rmk}
By the transformation law \eqref{74} below, $w \in C^1(\overline{B_\ep^\pm})$ implies that the mean curvatures of $B_\ep\cap \Sigma$ with respect to $g|_{\overline{B_\ep^\pm}}$ are well-defined.
\end{rmk}

\begin{rmk}
\begin{enumerate}[(i)]
\item In \cite{LNX22}, the theory of minimal hypersurfaces was used to
prove the non-existence of $C^1$ solutions to the $\sigma_k$-Loewner-Nirenberg problem when $k \geq 2$ in domains with more than one boundary component. Two important steps of \cite{LNX22} are
\begin{itemize}
\item The asymptotic behaviour of a solution $w(x) \sim d(x,\partial\Omega)$ near $\partial\Omega$ implies that the hypersurfaces parallel to $\partial\Omega$ are mean-convex with respect to $g = w^{-2}|dx|^2$ and the normal pointing into $\Omega$. Therefore, when there is more than one boundary component, there exists a minimal hypersurface (with respect to $g$) confined to $\Omega$ with possibly lower dimensional singularities. 
\item There is a differential inequality for any $w\in C^1$ satisfying $\lambda(-A_w)\in\overline{\Gamma_2^+}$ (see Corollary 3.9 therein), which essentially acts as an obstruction to the existence of a confined minimal hypersurface with respect to $g$, yielding a contradiction. Crucially, this inequality is not valid when $w$ is only Lipschitz.
\end{itemize}

The assertion in Theorem \ref{D} that $\Sigma\cap B_\ep$ is minimal with respect to $g|_{B_\ep}$ for $k = 2$ therefore directly echos the above picture.

\item It would be interesting to understand the geometric nature of the singular set $\Sigma$ for $k\geq 3$. 
\end{enumerate}
\end{rmk}

One may verify directly that the assertion of Theorem \ref{D} holds for solutions to the $\sigma_k$-Loewner-Nirenberg problem on annuli for all $k\geq 2$ as follows. In \cite[Theorem 1.3]{LN20b}, Li \& Nguyen showed that on the annulus $\Omega = \{a<|x|<b\}$, the viscosity solution $u^{\frac{4}{n-2}}|dx|^2$ to the $\sigma_k$-Loewner-Nirenberg problem has a jump in the radial derivative at $|x| = \sqrt{ab}$, and with the choice $\Omega^+ = \{|x|>\sqrt{ab}\}$ (so that $\nu$ is the outward pointing unit normal), it holds that
\begin{align*}
\nabla_\nu \ln u^- = -\frac{n-2}{\sqrt{ab}} \quad \text{and} \quad \nabla_\nu \ln u^+ = 0. 
\end{align*}
Now recall that for $g = u^{\frac{4}{n-2}}|dx|^2$, one has the following transformation law for the mean curvature of a hypersurface $\Sigma$:
 \begin{align}\label{74}
H_{(\Sigma, g)} = u^{-\frac{2}{n-2}}\bigg(-\frac{2(n-1)}{n-2}\nabla_\nu \ln u + H_\Sigma \bigg).
\end{align}
Taking $\Sigma = \{|x|=\sqrt{ab}\}$ and noting that $H_\Sigma = -\frac{n-1}{\sqrt{ab}}$, we see that with respect to $\nu$,
\begin{align*}
H_{(\Sigma, \, (u^-)^{\frac{4}{n-2}}|dx|^2)} = u^{-\frac{2}{n-2}}\frac{n-1}{\sqrt{ab}}>0 \quad \text{and} \quad H_{(\Sigma, \, (u^+)^{\frac{4}{n-2}}|dx|^2)} = -u^{-\frac{2}{n-2}}\frac{n-1}{\sqrt{ab}}<0,
\end{align*}
as claimed.\medskip

As pointed out earlier, the solution to the $\sigma_k$-Loewner-Nirenberg problem on the annulus $\Omega = \{a<|x|<b\}$ is $C^{1,\frac{1}{k}}_{\operatorname{loc}}$ but not $C^{1,\gamma}_{\operatorname{loc}}$ for any $\gamma>\frac{1}{k}$ in each of $\{a<|x|\leq \sqrt{ab}\}$ and $\{\sqrt{ab} \leq |x| < b\}$. We expect such H\"older regularity of the gradient to hold in more general domains when the solution admits a hypersurface of singular points. A key difficulty in this more general setting (where ODE methods are not obviously applicable) is that one expects the equation to become degenerate elliptic at the singular hypersurface, which complicates the asymptotic analysis. (The degenerate ellipticity can be derived from the computations in Section \ref{s3} although we do not do this explicitly.)  This is in contrast with the behaviour of the solution to the $\sigma_k$-Loewner-Nirenberg problem near the boundary of the domain, where the equation remains uniformly elliptic (since the solution is asymptotically hyperbolic) and $C^2$ estimates are available. This fact was used by Li, Nguyen \& Xiong \cite{LNX22} in combination with the small perturbation argument of Savin \cite{Sav07} to obtain polyhomogeneous expansions at the boundary to any order.\medskip

Nonetheless, we provide some evidence in support of the expected H\"older regularity of the gradient, at least for $k=2$, in the form of the following proposition (we restrict our attention to $B_\ep^+$; the amendments in $B_\ep^-$ are obvious). We will assume that $\widetilde{w}_0>0$ and $w_1$ are two functions defined on $\Sigma\cap B_\ep^+ $ satisfying the conclusions of Theorems \ref{A} and \ref{D}, that is
\begin{align}\label{88}
-\widetilde{w}_0\Delta_\Sigma \widetilde{w}_0 +  \widetilde{w}_0w_1H_\Sigma+ \frac{n-1}{2}|\nabla_\Sigma \widetilde{w}_0|^2 + \frac{n-1}{2}w_1^2 = 0
\end{align}
and
\begin{align}\label{65'}
\widetilde{w}_0H_\Sigma + (n-1)w_1 < 0 \quad \text{on }\Sigma\cap B_\ep^+.
\end{align}
Note that when $\widetilde{w}_0=w_0$ and $w_1 = \nabla_\nu w^+$, the LHS of \eqref{65'} is precisely the mean curvature of $\Sigma$ with respect to $g|_{\overline{B_\ep^+}} = w_+^{-2}|dx|^2$ and $\nu$, which can be seen by making the substitution $u = w^{-\frac{n-2}{2}}$ in the more familiar transformation law \eqref{74}. \medskip

Suppose that $\varepsilon$ is sufficiently small such that every point in $B_\varepsilon$ has a unique closest point on $\Sigma$. Let $\pi$ denote the closest point projection onto $\Sigma$. We search for a function $w_*^+ \not \equiv 0$ defined on $\Sigma$ and $p\in (1,2)$ such that
	\begin{align}
	\text{the function } &\bar{w}^+(x) \defeq \widetilde{w}_0(\pi(x)) +  w_1 (\pi(x)) \mathrm{d}(x) + w_*^{+}(\pi(x)) \mathrm{d}(x)^{p} \text{ satisfies }\nonumber\\
	&
	\begin{cases}
	\lambda(-A_{\bar{w}^+})\in\Gamma_2^+ \text{ in }B_\ep^+,\\ 
	\lim_{d\rightarrow 0} \sigma_2(\lambda(-A_{\bar{w}^+})) = 1.
	\end{cases}
	\label{69}
	\end{align}

\begin{prop}\label{B}
	Suppose $\widetilde{w}_0>0$ and $w_1$ satisfy \eqref{88} and \eqref{65'} on $\Sigma\cap B_\ep^+$. Then there exists a function $w_*^+ \not \equiv 0$ defined on $\Sigma$  and $p\in (1,2)$ such that \eqref{69} holds after possibly shrinking $\varepsilon$ if and only if $p=\frac{3}{2}$. Moreover, when $p = \frac{3}{2}$, $w_*^+ < 0$ and is uniquely and explicitly determined by $\tilde w_0, w_1$ and $\Sigma$. An analogous statements also hold in $B_\ep^-$. 
\end{prop}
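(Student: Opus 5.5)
The plan is a direct asymptotic expansion of $-A_{\bar w^+}$ in Fermi coordinates $(y,\mathrm{d})$, $y\in\Sigma$, near $\Sigma\cap B_\ep^+$, using the Schur-complement formula for $\sigma_2$. In these coordinates $\nabla \mathrm{d}=\nu$, and the Euclidean Hessian of a function $f(y,\mathrm{d})$ splits as follows:
\begin{itemize}
\item the normal–normal entry is $\partial_{\mathrm{d}}^2 f$;
\item the mixed entries are $\nabla_\Sigma\partial_{\mathrm{d}} f+O(|\nabla f|)$;
\item the tangential block on the parallel surface $\Sigma_{\mathrm{d}}$ is $\nabla^2_{\Sigma_{\mathrm{d}}} f-\partial_{\mathrm{d}}f\,\mathrm{I\!I}_{\Sigma_{\mathrm{d}}}$, with $\mathrm{I\!I}_{\Sigma_{\mathrm{d}}}=\mathrm{I\!I}_\Sigma+O(\mathrm{d})$.
\end{itemize}
Insert $\bar w^+=\widetilde w_0+w_1\mathrm{d}+w_*^+\mathrm{d}^p$ with $p\in(1,2)$. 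Then $\partial_{\mathrm{d}}\bar w^+=w_1+pw_*^+\mathrm{d}^{p-1}$ and $\partial_{\mathrm{d}}^2\bar w^+=p(p-1)w_*^+\mathrm{d}^{p-2}$. Everything except the normal–normal entry stays bounded.

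\textbf{Leading orders of the three blocks.}
\begin{itemize}
\item Normal–normal: $(-A)_{\nu\nu}=-p(p-1)\widetilde w_0w_*^+\mathrm{d}^{p-2}+O(1)$.
\item Mixed: $(-A)_{\nu i}=m_i+o(1)$, where $m$ is an explicit bounded tangential vector built from $\widetilde w_0,w_1$ and their tangential gradients.
\item Tangential block: $-A_T=T_{w_1}+p\,w_*^+\big(\widetilde w_0\mathrm{I\!I}_\Sigma+w_1 g_\Sigma\big)\mathrm{d}^{p-1}+O(\mathrm{d})$.
\end{itemize}
In the tangential block, the $\mathrm{d}^{p-1}$ term collects two contributions. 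One is $-\bar w\cdot(-\partial_{\mathrm{d}}\bar w\,\mathrm{I\!I})$. The other is the cross term $w_1pw_*^+\mathrm{d}^{p-1}$ coming from $\tfrac12|\nabla\bar w|^2$. All other corrections are $O(\mathrm{d})=o(\mathrm{d}^{p-1})$ because $p<2$.

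Now use the expansion
\[
\sigma_2(-A)=(-A)_{\nu\nu}\,\sigma_1(-A_T)+\sigma_2(-A_T)-\sum_i (-A)_{\nu i}^2 .
\]
The key input is \eqref{88}, which is exactly $\sigma_1(g_\Sigma^{-1}T_{w_1})=0$. Hence $\sigma_1(-A_T)=p\,w_*^+\big(\widetilde w_0H_\Sigma+(n-1)w_1\big)\mathrm{d}^{p-1}+O(\mathrm{d})$, and therefore
\[
\sigma_2(-A_{\bar w^+})=-p^2(p-1)\,\widetilde w_0\,(w_*^+)^2\big(\widetilde w_0H_\Sigma+(n-1)w_1\big)\,\mathrm{d}^{2p-3}+\sigma_2(T_{w_1})-|m|^2+o(1).
\]
By \eqref{65'} the coefficient of $\mathrm{d}^{2p-3}$ is $\ge0$, and it is strictly positive wherever $w_*^+\neq0$.

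\textbf{Case analysis in $p$.}
\begin{itemize}
\item If $p<\tfrac32$, then at any point of $\Sigma$ where $w_*^+\ne0$ the quantity $\sigma_2\to+\infty$. So \eqref{69} fails.
\item If $p>\tfrac32$, the limit is $\sigma_2(T_{w_1})-|m|^2$. Since $T_{w_1}$ is trace-free, $\sigma_2(T_{w_1})=-\tfrac12|T_{w_1}|^2\le0$, so the limit is $\le 0<1$, and again \eqref{69} fails.
\item If $p=\tfrac32$, the condition $\lim\sigma_2=1$ becomes
\[
\tfrac{9}{8}\,\widetilde w_0\,(w_*^+)^2\big(-\widetilde w_0H_\Sigma-(n-1)w_1\big)=1+\tfrac12|T_{w_1}|^2+|m|^2>0 .
\]
This determines $(w_*^+)^2$ uniquely and positively, so $w_*^+$ vanishes nowhere.
\end{itemize}

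\textbf{Sign of $w_*^+$ and the cone condition.} The cone condition $\lambda(-A)\in\Gamma_2^+$ requires $\sigma_1(-A)>0$ near $\Sigma$, and $\sigma_1(-A)$ is dominated by $-\tfrac34\widetilde w_0w_*^+\mathrm{d}^{-1/2}$. This forces $w_*^+<0$, i.e.
\[
w_*^+=-\Big(\tfrac{8(1+\frac12|T_{w_1}|^2+|m|^2)}{9\widetilde w_0(-\widetilde w_0H_\Sigma-(n-1)w_1)}\Big)^{1/2}.
\]
Conversely, with this choice $\sigma_1\to+\infty$ and $\sigma_2\to1>0$. Hence $\lambda(-A_{\bar w^+})\in\Gamma_2^+$ in $\{0<\mathrm{d}<\ep\}$ after shrinking $\ep$. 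The $B_\ep^-$ case is identical with $\nu$ replaced by $-\nu$.

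\textbf{Main obstacle.} The delicate step is the careful bookkeeping of the $\mathrm{d}^{p-1}$ coefficient of $\sigma_1(-A_T)$. One must check that the Fermi-coordinate tangential Hessian correction contributes exactly $\widetilde w_0H_\Sigma$, with the right sign for the convention that $\nu$ points into $B_\ep^+$. One must also verify that every remaining term in the product $(-A)_{\nu\nu}\sigma_1(-A_T)$, such as $\mathrm{d}^{p-2}\cdot O(\mathrm{d})=O(\mathrm{d}^{p-1})$, is genuinely $o(1)$. Finally, $m$ should be written out explicitly to justify the claim that $w_*^+$ is explicitly determined by $\widetilde w_0$, $w_1$ and $\Sigma$.
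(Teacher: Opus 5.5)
Your proposal is correct and follows the same overall strategy as the paper. Both arguments expand $\sigma_2(\lambda(-A_{\bar w^+}))$ along normal lines in powers of $\mathrm{d}$ and use \eqref{88} to kill the $\mathrm{d}^{p-2}$ term. Both then isolate the $\mathrm{d}^{2p-3}$ coefficient $-p^2(p-1)\widetilde w_0(w_*^+)^2(\widetilde w_0H_\Sigma+(n-1)w_1)$, compare it with a non-positive $\mathrm{d}$-independent remainder, and fix the sign of $w_*^+$ from $\sigma_1>0$.

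What differs is the bookkeeping. The paper expands the scalar formula \eqref{30} via $\Delta\bar w$, $|\nabla^2\bar w|^2$ and $|\nabla\bar w|^2$ in a principal Cartesian frame. It must invoke \eqref{88} a second time (passing from \eqref{42} to \eqref{43}) before the constant term is visibly non-positive. Your splitting $\sigma_2(M)=M_{\nu\nu}\sigma_1(M_T)+\sigma_2(M_T)-|M_{\nu T}|^2$ in Fermi coordinates makes each step structural:
\begin{itemize}
\item the $\mathrm{d}^{p-2}$ coefficient is a multiple of $\operatorname{tr}_{g_\Sigma}T_{w_1}$, which is zero by \eqref{88};
\item the blow-up coefficient is $(-A)_{\nu\nu}$ times the first-order correction of $\sigma_1(-A_T)$, visibly $p\,w_*^+$ times the quantity in \eqref{65'};
\item the constant term $-\frac12|T_{w_1}|^2-|m|^2$ is manifestly $\le 0$, with $T_{w_1}=-\widetilde w_0\mathring X$ recovering the paper's $\mathring X$ term.
\end{itemize}
You also check the $\Gamma_2^+$ condition in the converse direction explicitly ($\sigma_1\to+\infty$, $\sigma_2\to1$), which the paper leaves implicit.

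Two remarks on details.

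First, there is a small slip in the tangential block. The term $\frac12|\nabla\bar w|^2$ also contributes $\frac12p^2(w_*^+)^2\mathrm{d}^{2p-2}g_\Sigma$, which is not $O(\mathrm{d})$ when $p<\frac32$. It feeds an $O(\mathrm{d}^{3p-4})$ term into $\sigma_2$, so your displayed expansion is not accurate to $o(1)$ for $p\le\frac43$. Since $3p-4>2p-3$, this term is $o(\mathrm{d}^{2p-3})$, and your case analysis is unaffected.

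Second, when you write out $m$, keep the curvature correction you allowed for in the mixed entries. At the origin $\partial_a\partial_n(\widetilde w_0\circ\pi)=\partial_{ab}\phi\,\partial_b\widetilde w_0$, which is nonzero in general. Hence $m=-\widetilde w_0\big(\nabla_\Sigma w_1+\mathrm{I\!I}_\Sigma(\nabla_\Sigma\widetilde w_0,\cdot)\big)$. The paper's \eqref{81} drops this term and uses only $|\nabla_T w_1|^2$. Only the sign of $-|m|^2$ enters the argument, so this changes the explicit formula for $w_*^+$ but not the conclusions.
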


We note that the exponent $p = \frac{3}{2}$ is consistent with the observed $C^{1,\frac{1}{2}}$-regularity in \cite{LN20b} for the $\sigma_2$-Loewner-Nirenberg problem on annuli.\medskip

The plan of the paper is as follows. In Section \ref{s5} we prove Theorems \ref{A} and \ref{D}, and in Section 
\ref{s3} we prove Proposition \ref{B}.

\subsubsection*{Rights retention statement} For the purpose of Open Access, the authors have applied a CC BY public copyright licence to any Author Accepted Manuscript (AAM) version arising from this submission.

\section{Proof of Theorems \ref{A} and \ref{D}}\label{s5}

In this section we prove Theorems \ref{A} and \ref{D}, which concern the equation satisfied on the singular set $\Sigma$ by $w|_\Sigma$ and $\nabla_\nu w^\pm$, and the minimality of $\Sigma$ when $k=2$. We begin in Section \ref{s1} with a preliminary lemma (Lemma \ref{4}) which provides an ordering of the normal derivatives $\nabla_\nu w^{\pm}$. We also prove a related result (Theorem \ref{7}) whose proof follows a similar argument to that of Lemma \ref{4} (we note that Theorem \ref{7} will not be used in the rest of the paper). In Section \ref{s2} we give the proof of Theorem \ref{A}, and in Section \ref{s4} we give the proof of Theorem \ref{D}. \medskip

We will work at a fixed point $x_0\in \Sigma\cap B_\ep$ (or a neighbourhood thereof), which we take to be the origin of the coordinate system described in the introduction. We locally define
\begin{align*}
\mathring{w}(x') = w(x', \phi(x'))
\end{align*}
(so that in particular $\mathring{w}(0') = w(0', 0)$), and for $\alpha,\beta\in\mathbb{R}$ we locally define
\begin{align}\label{52}
\xi_{\alpha,\beta}(x', x_n) = \ring{w}(x') + \alpha(x_n - \phi(x')) + \frac{1}{2}\beta(x_n - \phi(x'))^2
\end{align}
and
\begin{align}\label{17}
(T_\alpha(0'))_{ab} = -\ring{w}(0')\big(\partial_{ab} \ring{w}(0') - \alpha \partial_{ab} \phi(0')\big) + \frac{1}{2}\big(|\nabla_\Sigma \ring{w}(0')|^2 + \alpha^2\big)\delta_{ab}
\end{align}
for $1 \leq a, b, \leq n-1$. Note that \eqref{17} is the expression in our local coordinates at $0'$ of the tensor $T_\alpha$ defined in \eqref{71}. Recalling also that $\nu$ is the unit normal vector of $\Sigma$ pointing into $B_\ep^+$, we see that this coincides at $0'$ with the $n$'th coordinate vector $e_n$.

\subsection{A preliminary lemma on the ordering of $\nabla_\nu w^\pm$}\label{s1}

The results in this subsection apply more generally to positive viscosity supersolutions of the equation
\begin{align}\label{1}
\lambda(-A_w)\in\partial\Gamma \quad \text{in }B_\ep,
\end{align}
i.e.~positive functions $w$ satisfying $\lambda(-A_w)\in\overline{\Gamma}$ in the viscosity sense, where $\Gamma$ satisfies the following standard assumptions: 
\begin{align}
& \Gamma\subset\mathbb{R}^n\text{ is an open, convex, connected symmetric cone with vertex at }0, \label{72} \\
& \Gamma_n^+ = \{\lambda\in\mathbb{R}^n:  \lambda_i > 0 ~\forall ~1\leq i \leq n\}  \subseteq \Gamma  \subseteq \Gamma_1^+ =  \{\lambda \in\mathbb{R}^n: \lambda_1+\dots+\lambda_n > 0\}. \label{73} 
\end{align}

\begin{lem}\label{4}
	Suppose $\Gamma$ satisfies \eqref{72} and \eqref{73}, and let $w$ be a positive viscosity supersolution to \eqref{1} admitting a hypersurface $\Sigma$ of singular points in $B_\ep$. Then 
	\begin{align*}
	 \nabla_\nu w^+ < \nabla_\nu w^- \quad\text{on }\Sigma \cap B_\ep.
	\end{align*}
\end{lem}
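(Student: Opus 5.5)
The idea is to argue by contradiction, using test functions that touch $w$ from below at a point of $\Sigma$ and are built from the functions $\xi_{\alpha,\beta}$ in \eqref{52}. The supersolution property says that if a $C^2$ function $\psi$ touches $w$ from below at a point, then $\lambda(-A_\psi)\in\overline\Gamma\subseteq\overline{\Gamma_1^+}$ there, so $\operatorname{tr}(-A_\psi)\ge 0$. A "convex kink" of $w$ across $\Sigma$ lets us touch from below with functions having an arbitrarily large positive second normal derivative. Such functions have $\operatorname{tr}(-A_\psi)\to-\infty$, which is the contradiction.

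\textbf{Step 1: no point with $\nabla_\nu w^+>\nabla_\nu w^-$.} Suppose $\nabla_\nu w^+(x_0)>\nabla_\nu w^-(x_0)$ at $x_0=0$, and fix $\alpha$ strictly between the two values. Write $t=x_n-\phi(x')$. Since $w\in C^1(\overline{B^\pm_\ep})$, the fundamental theorem of calculus along vertical segments gives
\begin{align*}
w(x',x_n)-\ring{w}(x') = t\,\big(\partial_n w^\pm(x',\phi(x'))+o(1)\big)
\end{align*}
uniformly as $|x|\to 0$. Note that $\partial_n w^\pm(x',\phi(x'))$ is comparable to $\nabla_\nu w^\pm$ up to tangential terms, which agree on both sides because the traces coincide. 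By continuity, $w-\ring{w}-\alpha t\ge c|t|$ for some $c>0$ in a small box around $0$. Now set
\begin{align*}
\psi = \ring{w}(x') + \alpha t + \tfrac12\beta t^2 - \delta|x'|^2 .
\end{align*}
Here $\ring{w}\in C^2$ by condition 3 of Definition \ref{E}, so $\psi\in C^2$. For any $\beta>0$ and $\delta>0$, after shrinking the box to size $\lesssim c/\beta$, we have $\psi\le w$ with equality only at $0$. Hence $\psi$ touches $w$ from below at $0$. At $0$ the Hessian of $\psi$ has $nn$-entry $\beta+O(1)$ and all other entries bounded independently of $\beta$, while $\nabla\psi(0)$ is bounded. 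Therefore
\begin{align*}
\operatorname{tr}(-A_\psi(0)) = -\ring{w}(0)\,\beta + O(1)\to-\infty \quad\text{as } \beta\to\infty,
\end{align*}
contradicting $\lambda(-A_\psi(0))\in\overline{\Gamma_1^+}$. Hence $\nabla_\nu w^+\le\nabla_\nu w^-$ on $\Sigma\cap B_\ep$.

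\textbf{Step 2: the equality set has empty interior.} Let $E=\{\nabla_\nu w^+=\nabla_\nu w^-\}\subset\Sigma\cap B_\ep$; it is relatively closed. If $E$ contained a relatively open set $V$, then on $V$ the full gradients of $w^\pm$ would agree, since tangential derivatives agree because $w^+=w^-=w_0$ on $\Sigma$. As $w\in C^1(\overline{B^\pm_\ep})$, $w$ would then be $C^1$ on a neighbourhood $U$ of $V$ with $U\cap\Sigma\neq\emptyset$, contradicting condition 2 of Definition \ref{E}.

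\textbf{Step 3 (main obstacle): excluding isolated or nowhere-dense equality points.} At $x_0\in E$ the above test functions fail, because there is no linear gap $c|t|$ to absorb $\tfrac12\beta t^2$, and $w$ has no second-order control in the normal direction. I would first try an $x'$-dependent slope
\begin{align*}
\psi=\ring{w}(x')+a(x')\,t+\tfrac12\beta t^2-\delta|x'|^2,
\end{align*}
with $a$ chosen between $\nabla_\nu w^+$ and $\nabla_\nu w^-$ at points near $x_0$, where Step 2 gives strict inequality on a dense set. The aim is to translate or tilt the touching point off $E$ and reach the Step 1 contradiction there. Failing that, I would use that the gap $\nabla_\nu w^--\nabla_\nu w^+$ is positive on the dense open set $(\Sigma\cap B_\ep)\setminus E$ and run a limiting argument. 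The difficulty is that the box size $c/\beta$ degenerates as the gap tends to zero, so such an argument must keep $\beta$ large while controlling where the touching point lands. I expect this step to require the most care.
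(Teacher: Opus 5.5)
\textbf{Overlap with the paper.} Your Step 1 is exactly the paper's proof. The paper assumes $\partial_n w^-(0)<\partial_n w^+(0)$ and picks $\alpha$ in between. It notes that $\xi_{\alpha,\beta}\le w$ near $0$ with equality at $0$, where the neighbourhood depends on $\beta$. It then derives $-\beta w(0)+\operatorname{tr}M\ge 0$ for every $\beta$, a contradiction. Your extra term $-\delta|x'|^2$ is harmless but unnecessary.

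\textbf{The gap: Step 3 is left open, and it cannot be closed.} No refinement of the test functions (tilted slopes, limiting arguments) will work. As your Step 2 shows, the literal condition 2 of Definition \ref{E} only says that $E=\{\nabla_\nu w^+=\nabla_\nu w^-\}$ has empty interior in $\Sigma\cap B_\ep$. Under that reading the strict inequality is false. Take $\Sigma=\{x_n=0\}$, $C\ge 1$, and
\[
w(x)=1-|x'|^2|x_n|-C|x|^2 \quad\text{on a small ball } B_\ep .
\]
\begin{enumerate}[(i)]
\item Definition \ref{E} holds. The functions $w^\pm$ are polynomials, $w|_\Sigma=1-C|x'|^2$, and $\nabla_\nu w^--\nabla_\nu w^+=2|x'|^2$ vanishes only at $x'=0$.
\item Off $\Sigma$, $\nabla^2 w\le -CI$ for $|x|$ small. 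Hence $-A_w\ge \frac{C}{2}I$ and $\lambda(-A_w)\in\Gamma_n^+\subseteq\Gamma$.
\item At $(x',0)$ with $x'\neq 0$ the corner in the $x_n$-direction is concave (right derivative $-|x'|^2$, left derivative $|x'|^2$). So no $C^2$ function touches $w$ from below there.
\item At $0$, $w=1-C|x|^2+O(|x|^3)$. Any $C^2$ function $\psi$ touching from below therefore has $\nabla\psi(0)=0$ and $\nabla^2\psi(0)\le -2CI$, so $-A_\psi(0)\ge 2CI$.
\end{enumerate}
Thus $w$ is a positive viscosity supersolution of \eqref{1} for every $\Gamma$ satisfying \eqref{72}--\eqref{73}. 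Yet $\nabla_\nu w^+(0)=\nabla_\nu w^-(0)=0$.

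\textbf{How the paper avoids this.} It reads condition 2 pointwise, following its gloss ``not $C^1$ across $\Sigma$ at any point'': the normal derivatives of $w^\pm$ differ at every point of $\Sigma\cap B_\ep$. This is exactly what its sentence ``we know by our choice of $x_0$ that $\partial_n w^+(0)\neq\partial_n w^-(0)$'' uses. So the missing ingredient is a hypothesis, not an analytic argument. With that hypothesis, your Step 1 alone proves the lemma and Steps 2--3 are superfluous. Without it, what Steps 1--2 give is the best possible: $\nabla_\nu w^+\le\nabla_\nu w^-$ everywhere, with strict inequality on a dense open subset of $\Sigma\cap B_\ep$. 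You should drop Step 3 rather than try to complete it.
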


\begin{proof}
	Let $x_0$ and the associated coordinate system be as described previously. To prove the lemma, it is equivalent to show 
	\begin{align*}
	\partial_n w^+(0) < \partial_n w^-(0),
	\end{align*}
	where we write $0$ for the point $x_0 = (0', 0)$. We know by our choice of $x_0$ that $\partial_n w^+(0) \not= \partial_n w^-(0)$. Let us assume for a contradiction that $\partial_n w^-(0) < \partial_n w^+(0)$, and take $\alpha\in\mathbb{R}$ such that 
	\begin{align}\label{2}
	\partial_n w^-(0) < \alpha < \partial_n w^+(0). 
	\end{align}
	
	Recall the definition of $\xi_{\alpha,\beta}$ from \eqref{52}, and note that this belongs to $C^2(B_\ep)$ by the third condition in Definition \ref{E}. We first observe that
	\begin{align}\label{53}
	\xi_{\alpha,\beta}(x',x_n) - w(x', x_n) & = \alpha (x_n - \phi(x')) + w(x',\phi(x')) - w(x', x_n) + \frac{1}{2}\beta(x_n - \phi(x'))^2  \nonumber \\
	& = \Big[\alpha - \partial_n w(x', \eta) + \frac{1}{2}\beta(x_n - \phi(x'))\Big](x_n - \phi(x'))
	\end{align}
	for some $\eta$ between $x_n$ and $\phi(x')$. It follows from \eqref{2} and \eqref{53} that, after taking $\ep$ smaller if necessary, 
	\begin{align*}
	\xi_{\alpha,\beta} >0 \quad \text{and} \quad \xi_{\alpha,\beta} \leq w \quad \text{in }B_\ep. 
	\end{align*}
	Since $\xi_{\alpha,\beta}(0) = w(0)$, $\xi_{\alpha,\beta}$ is therefore an admissible test function at $0$ in the definition of viscosity supersolution, yielding
	\begin{align}\label{3}
	\lambda(-A_{\xi_{\alpha,\beta}}(0))\in\overline{\Gamma}
	\end{align}
	for any $\beta\in\mathbb{R}$. \medskip 
	
	For $a,b\in\{1,\dots,n-1\}$, we now compute
	\begin{align}\label{11}
	\partial_a \xi_{\alpha,\beta} & = \partial_a \ring{w}- \alpha \partial_a \phi - \beta(x_n - \phi)\partial_a \phi \nonumber \\
	\partial_n \xi_{\alpha,\beta} & = \alpha + \beta (x_n - \phi) \nonumber \\
	\partial_{ab} \xi_{\alpha,\beta} & = \partial_{ab} \ring{w} - \alpha \partial_{ab} \phi + \beta \partial_a \phi \partial_b \phi - \beta(x_n - \phi)\partial_{ab}\phi \nonumber \\
	\partial_{an} \xi_{\alpha,\beta} & = -\beta \partial_a \phi \nonumber \\
	\partial_{nn}\xi_{\alpha,\beta} & = \beta.
	\end{align}
	It follows that
	\begin{align}\label{12}
	\nabla^2 \xi_{\alpha,\beta}(0) = \left(
	\begin{array}{c|c}
	\partial_{ab}w(0) - \alpha \partial_{ab} \phi(0) & 0\\
	\hline
	0 & \beta
	\end{array}
	\right),
	\end{align}
	and moreover the values $\xi_{\alpha,\beta}(0) = w(0)$ and $|\nabla \xi_{\alpha,\beta}(0)|^2$ are independent of $\beta$. Therefore, for some matrix $M$ independent of $\beta$, \eqref{3} can be rewritten as
	\begin{align*}
	\lambda(-\beta w(0) e_n\otimes e_n + M)\in\overline{\Gamma}. 
	\end{align*}
	In particular, $\operatorname{tr}(-\beta w(0) e_n\otimes e_n + M) = -\beta w(0) +\operatorname{tr}(M) \geq 0$, which is a contradiction for $\beta \gg 0$. 
\end{proof}

Similar ideas lead to the following: 

\begin{thm}\label{7}
	Suppose $\Gamma$ satisfies $(1,0,\dots,0)\in\Gamma$, \eqref{72} and \eqref{73}, and let $w$ be a viscosity supersolution to $\lambda(-A_w)\in\partial\Gamma$ on some domain $\Omega\subset\mathbb{R}^n$. Then $w$ does not admit a hypersurface of singular points in any $B_\ep \Subset \Omega$. 
\end{thm}

\begin{rmk}
	Under the assumption $(1,0,\dots,0)\in\Gamma$, the equation $\lambda(-A_w)\in\partial\Gamma$ is locally strictly elliptic -- see \cite[Proposition A.1]{LN20}. 
\end{rmk}
 
\begin{rmk}
When $(1, 0, \ldots, 0) \in \Gamma$, Theorem \ref{7} implies that solutions to \eqref{6} with $\Gamma_k$ replaced by $\Gamma$ do not admit a hypersurface of singular points. This is consistent with the fact that a priori second derivative estimates hold in such case (see \cite{DN23, Guan08, GV03b}).
\end{rmk}

\begin{proof}[Proof of Theorem \ref{7}]
	We suppose for a contradiction that $w$ admits a hypersurface of singular points in some $B_\ep\Subset \Omega$, so that by Lemma \ref{4} one can take $\alpha\in\mathbb{R}$ satisfying
	\begin{align*}
	\partial_n w^+(0)<\alpha<\partial_n w^-(0).
	\end{align*}
	Using similar arguments to the proof of Lemma \ref{4}, we see 
	\begin{align*}
	\xi_{\alpha,\beta} > 0  \quad \text{and} \quad \xi_{\alpha,\beta} \geq w  \quad \text{in }B_\ep. 
	\end{align*}
	Since $\xi_{\alpha,\beta}(0) = w(0)$, $\xi_{\alpha,\beta}$ is therefore an admissible test function at 0 in the definition of viscosity subsolution, yielding
	\begin{align*}
	\lambda(-A_{\xi_{\alpha,\beta}}(0))\in\mathbb{R}^n\backslash\Gamma
	\end{align*}
	for any $\beta\in\mathbb{R}$. The same computation as before then tells us that for some matrix $M$ independent of $\beta$ we have 
	\begin{align*}
	\lambda(-\beta w(0)e_n\otimes e_n + M)\in\mathbb{R}^n\backslash\Gamma,
	\end{align*}
	which is a contradiction for $\beta\ll0$ since $(1,0,\dots,0)\in\Gamma$. 
\end{proof}

\subsection{Proof of Theorem \ref{A}}\label{s2}

We first prove one side of the differential inclusion claimed in \eqref{26}, which only requires $\nabla_\nu w^+ \leq \alpha \leq \nabla_\nu w^-$ (rather than $\alpha =  \nabla_\nu w^\pm$):

\begin{lem}\label{24}
	Suppose $w$ is a viscosity solution to \eqref{6} admitting a hypersurface of singular points in $B_\ep$. Then 
	\begin{align*}
	\lambda(g_\Sigma^{-1}T_\alpha)\in\mathbb{R}^n\backslash\Gamma_{k-1}^+ \quad \text{in }\Sigma \cap B_\ep \quad \text{for }  \nabla_\nu w^+ \leq \alpha \leq \nabla_\nu w^-.
	\end{align*}
\end{lem}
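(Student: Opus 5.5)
The plan is to run the argument of Theorem \ref{7} at a fixed point $x_0\in\Sigma\cap B_\ep$, which we take as the origin of the local coordinates. We test $w$ from \emph{above} with $\xi_{\alpha,\beta}$ and use the viscosity subsolution property of $w$. Then we show that if $\lambda(T_\alpha(0'))\in\Gamma_{k-1}^+$, a suitable choice of $\beta$ violates that property.

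\textbf{Step 1 (test function for interior $\alpha$).} First take $\alpha$ with $\partial_n w^+(0)<\alpha<\partial_n w^-(0)$; this interval is non-empty by Lemma \ref{4}. By \eqref{53}, the bracket $\alpha-\partial_n w(x',\eta)+\frac12\beta(x_n-\phi)$ is negative when $x_n>\phi$ and positive when $x_n<\phi$, after shrinking $\ep$ depending on $\beta$. Hence $\xi_{\alpha,\beta}\ge w$ near $0$, with equality at $0$, and $\xi_{\alpha,\beta}>0$. Since $\xi_{\alpha,\beta}\in C^2$ by condition 3 of Definition \ref{E}, it is admissible in the definition of viscosity subsolution of \eqref{6}. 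This gives
\[
\lambda(-A_{\xi_{\alpha,\beta}}(0))\notin\{\lambda\in\Gamma_k^+:\sigma_k(\lambda)>1\}\quad\text{for every }\beta\in\mathbb{R}.
\]

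\textbf{Step 2 (block structure).} Since $\nabla\phi(0')=0$, \eqref{11} gives $\partial_a\xi_{\alpha,\beta}(0)=\partial_a\ring w(0')$ and $\partial_n\xi_{\alpha,\beta}(0)=\alpha$. Hence $|\nabla\xi_{\alpha,\beta}(0)|^2=|\nabla_\Sigma\ring w(0')|^2+\alpha^2$. Combining this with \eqref{12},
\[
-A_{\xi_{\alpha,\beta}}(0)=\left(\begin{array}{c|c} T_\alpha(0') & 0\\ \hline 0 & \mu\end{array}\right),\qquad \mu=-w(0)\beta+\tfrac12\big(|\nabla_\Sigma\ring w(0')|^2+\alpha^2\big),
\]
with $T_\alpha(0')$ as in \eqref{17}. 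Because $w(0)>0$, the entry $\mu$ takes every real value as $\beta$ varies.

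\textbf{Step 3 (contradiction).} Suppose $\lambda':=\lambda(T_\alpha(0'))\in\Gamma_{k-1}^+\subset\mathbb{R}^{n-1}$. For $1\le j\le k$ we have $\sigma_j(\lambda',\mu)=\mu\,\sigma_{j-1}(\lambda')+\sigma_j(\lambda')$, where $\sigma_{j-1}(\lambda')>0$. Taking $\mu$ large, i.e.\ $\beta\ll0$, makes all of these positive and makes $\sigma_k(\lambda',\mu)>1$. This contradicts Step 1, so $\lambda(T_\alpha(0'))\notin\Gamma_{k-1}^+$. Since \eqref{17} is the coordinate expression of $g_\Sigma^{-1}T_\alpha$ at $x_0$, and $x_0$ was arbitrary, the claim holds for all $\alpha$ in the open interval.

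\textbf{Step 4 (endpoints).} $T_\alpha$ depends continuously (polynomially) on $\alpha$, and the complement of the open cone $\Gamma_{k-1}^+$ is closed. Letting $\alpha\to\nabla_\nu w^\pm$ therefore extends the conclusion to the closed interval $\nabla_\nu w^+\le\alpha\le\nabla_\nu w^-$.

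The only delicate point is Step 1: the one-sided Taylor estimate must give $\xi_{\alpha,\beta}\ge w$ for the fixed, possibly very negative, $\beta$. This is handled by shrinking the neighbourhood after fixing $\beta$, which is legitimate because viscosity testing is local. It uses only $w\in C^1(\overline{B_\ep^\pm})$; no $C^{1,\delta}$ regularity is needed.
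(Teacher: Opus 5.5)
Your argument is correct and is essentially the paper's proof: you test from above with $\xi_{\alpha,\beta}$ for $\partial_n w^+(0)<\alpha<\partial_n w^-(0)$ (nonempty by Lemma \ref{4}), read off the block form of $-A_{\xi_{\alpha,\beta}}(0)$, and send $\beta\to-\infty$. You then reach the endpoints by continuity, passing to the limit in the closed set $\mathbb{R}^{n-1}\setminus\Gamma_{k-1}^+$ where the paper passes to the limit in the viscosity dichotomy, which makes no difference.

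One slip to fix: in Step 1 you have the signs of the bracket reversed. It is positive for $x_n>\phi$ (because $\alpha>\partial_n w^+(0)$) and negative for $x_n<\phi$ (because $\alpha<\partial_n w^-(0)$), and it is this that gives $\xi_{\alpha,\beta}-w\geq 0$.
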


\begin{proof}
	Again we let $x_0$ and the associated coordinate system be as described previously. To prove the lemma, it is equivalent to show $\lambda((T_\alpha(0'))_{ab})\in\mathbb{R}^n\backslash\Gamma_k^+$ for $\partial_n w^+(0) \leq \alpha \leq \partial_n w^-(0)$.\medskip 
	
	First observe that \eqref{11} and \eqref{12} combine to give
	\begin{align}\label{8}
	-A_{\xi_{\alpha,\beta}}(0) = \left(
	\begin{array}{c|c}
	(T_\alpha(0'))_{ab} & 0\\
	\hline
	0 & -\beta w(0) + \frac{1}{2}(|\nabla_\Sigma w_0(0)|^2 + \alpha^2)
	\end{array}
	\right).
	\end{align}
	If $\partial_n w^+(0) < \alpha < \partial_n w^-(0)$ then, as in the proof of Theorem \ref{7}, $\xi_{\alpha,\beta}$ is an admissible test function at $0$ in the definition of viscosity subsolution to \eqref{6}. Therefore either $\lambda(-A_{\xi_{\alpha,\beta}}(0))\in\mathbb{R}^n\backslash\Gamma_k^+$ or 
	\begin{align*}
	\sigma_k(\lambda(-A_{\xi_{\alpha,\beta}}(0))) \leq 1, \quad \lambda(-A_{\xi_{\alpha,\beta}}(0))\in\Gamma_k^+. 
	\end{align*}
	By continuity, the same dichotomy holds for $\partial_n w^+(0) \leq \alpha \leq \partial_n w^-(0)$. Now if it were the case that $\lambda((T_\alpha(0'))_{ab})\in\Gamma_{k-1}^+$, then for $\beta \ll 0$, \eqref{8} would imply $\lambda(-A_{\xi_{\alpha,\beta}}(0))\in\Gamma_k^+$ and $\sigma_k(\lambda(-A_{\xi_{\alpha,\beta}}(0)))>1$, contradicting the dichotomy. Therefore $\lambda((T_\alpha(0'))_{ab})\in\mathbb{R}^n\backslash\Gamma_k^+$ for $\partial_n w^+(0) \leq \alpha \leq \partial_n w^-(0)$.
\end{proof}

We now address the other side of the differential inclusion in \eqref{26}, which requires $\alpha=\nabla_\nu w^\pm$:

\begin{lem}\label{21}
	Suppose $w$ is a viscosity solution to \eqref{6} admitting a hypersurface of singular points $B_\ep$. Then 
	\begin{align}\label{60}
	\lambda(g_\Sigma^{-1}T_\alpha)\in \overline{\Gamma_{k-1}^+} \quad \text{on } \Sigma \cap B_\ep \quad \text{for }  \alpha=\nabla_\nu w^\pm.
	\end{align}
\end{lem}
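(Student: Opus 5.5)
The plan is to prove \eqref{60} directly from the classical equation on each side of $\Sigma$, and not by the viscosity test-function arguments of Lemmas \ref{4} and \ref{24}. We treat $\alpha=\nabla_\nu w^+$; the case $\alpha=\nabla_\nu w^-$ is identical with $B_\ep^-$ and $-\nu$. The test-function route seems blocked. Lemma \ref{4} says the kink of $w$ along $\Sigma$ is concave, so no smooth function touches $w$ from below at a point of $\Sigma$. For points of $B_\ep^+$ near $\Sigma$, a function of the form $\xi_{\alpha,\beta}$ with $\beta>0$ large only produces interior maxima of $w-\xi_{\alpha,\beta}$, never minima. Instead we use that $w\in C^2_{\operatorname{loc}}(B_\ep^+)$ is a classical solution there, so $\lambda(-A_w)\in\Gamma_k^+$ pointwise in $B_\ep^+$.

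\textbf{Step 1.} A principal $(n-1)\times(n-1)$ submatrix of a symmetric matrix with eigenvalues in $\Gamma_k^+$ has eigenvalues in $\Gamma_{k-1}^+$. Concretely, if $M=\begin{pmatrix} M' & b\\ b^T & c\end{pmatrix}$ and $\lambda(M)\in\Gamma_k^+$, then $\lambda(M')\in\Gamma_{k-1}^+$. This is the standard fact $\partial\sigma_k/\partial M_{nn}=\sigma_{k-1}(M')>0$ on $\Gamma_k^+$, applied in every orthonormal frame.

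Let $\Sigma_t=\{x+t\nu(x):x\in\Sigma\}$ for small $t>0$, parametrised over $\Sigma$ via $\pi$, and set $w_t(y)=w(y+t\nu(y))$. We restrict $-A_w$ to $T\Sigma_t$ and use the Gauss formula
\[
\nabla^2 w|_{T\Sigma_t}=\nabla^2_{\Sigma_t}(w|_{\Sigma_t})+(\nabla_\nu w)\,\mathrm{I\!I}_{\Sigma_t}.
\]
This gives that the tensor
\begin{align*}
T^{(t)} \defeq -w\big(\nabla^2_{\Sigma_t}(w|_{\Sigma_t})+(\nabla_\nu w)\,\mathrm{I\!I}_{\Sigma_t}\big)+\tfrac12|\nabla w|^2 g_{\Sigma_t}
\end{align*}
has eigenvalues, with respect to $g_{\Sigma_t}$, in $\Gamma_{k-1}^+$ on $\Sigma_t$. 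We must fix the orientation of $\mathrm{I\!I}$ so that this matches the sign $-\alpha\mathrm{I\!I}_\Sigma$ in \eqref{71}; this is a bookkeeping check against \eqref{11}--\eqref{12}.

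Pull everything back to $\Sigma$ via $y\mapsto y+t\nu(y)$. Since $w\in C^1(\overline{B_\ep^+})$, as $t\to0^+$ we have $w_t\to w_0$ in $C^1$, $\nabla_\nu w\to\nabla_\nu w^+$ and $|\nabla w|^2\to|\nabla_\Sigma w_0|^2+(\nabla_\nu w^+)^2$ uniformly, and $g_{\Sigma_t},\mathrm{I\!I}_{\Sigma_t}$ converge smoothly.

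\textbf{Step 2 (main obstacle).} The only uncontrolled term is $w_t\nabla^2_{\Sigma_t}w_t$, since second derivatives of $w$ may blow up at $\Sigma$. We handle it weakly. The set $\mathcal{C}=\{S \text{ symmetric}:\lambda(g^{-1}S)\in\overline{\Gamma_{k-1}^+}\}$ is a closed convex cone. Hence for any smooth compactly supported $\psi\ge0$ on $\Sigma\cap B_\ep$ and any fixed parallel frame, the averages $\int\psi\,T^{(t)}$ lie in the closure of the cone. The metric dependence is harmless after passing to a local frame, since we may freeze $g_\Sigma(x_0)$ and the error is small on small supports.

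We integrate by parts:
\[
\int\psi\,w_t\,\partial_{ab}w_t=-\int\partial_a(\psi w_t)\,\partial_b w_t+(\text{lower order terms involving Christoffel symbols}).
\]
By the $C^1$ convergence this tends to $\int\psi\,w_0\,\partial_{ab}w_0$, the last step using $w_0\in C^2$ from condition 3 of Definition \ref{E}. Therefore $\int\psi\,T^{(t)}\to\int\psi\,T_{\alpha}$ with $\alpha=\nabla_\nu w^+$, which lies in the closed cone. The delicate points are performing the integration by parts in a way compatible with the varying metrics $g_{\Sigma_t}$, and justifying the limit uniformly in the frame.

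\textbf{Step 3.} Since $T_\alpha$ is continuous on $\Sigma\cap B_\ep$, we let $\psi$ concentrate at an arbitrary point $x_0$, normalised to unit mass. Closedness of the cone then gives $\lambda(g_\Sigma^{-1}T_\alpha)(x_0)\in\overline{\Gamma_{k-1}^+}$, which is \eqref{60}. Combined with Lemma \ref{24} at the endpoints $\alpha=\nabla_\nu w^\pm$, this yields \eqref{26}.
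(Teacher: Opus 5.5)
Your proof is correct, but it takes a different route from the paper's. Your remark that the test-function route is blocked is not accurate: the paper's proof is exactly such a route.

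\textbf{The paper's argument.} The paper works with $\xi_{\alpha,\beta,\gamma}=\ring{w}-\gamma|x'|^2+\alpha(x_n-\phi)+\frac12\beta(x_n-\phi)^2$. Here $\alpha$ is slightly above $\partial_nw^+(0)$ and $\beta=-3\delta/\ep_2$ is large and \emph{negative}. The domain is the thin region $\Omega_{\ep_1,\ep_2}\subset B_\ep^+$ with $\ep_2=\gamma\ep_1^2$. The paper shows $\xi_{\alpha,\beta,\gamma}\le w$ on $\partial\Omega_{\ep_1,\ep_2}$ but not everywhere inside. It then lowers $\xi_{\alpha,\beta,\gamma}$ by some $c\le\delta\ep_2$, producing an interior point $x_1$ where it touches $w$ from below. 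It applies the supersolution property at $x_1$ and extracts the tangential block by interlacing. Finally it lets $\alpha\to\partial_nw^+(0)$, $\ep_1\to0$ and $\gamma\to0$. The possible blow-up of $\nabla^2 w$ never enters, because the test function's Hessian is explicit and $\beta\ep_2=-3\delta\to0$.

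\textbf{Your argument.} You use the pointwise inclusion $\lambda(-A_w)\in\Gamma_k^+$ in $B_\ep^+$, available since $w\in C^2_{\operatorname{loc}}(B_\ep^+)$, on the parallel hypersurfaces $\Sigma_t$. You neutralise the blow-up by testing against $\psi\ge0$. One tangential integration by parts reduces everything to first derivatives, which converge uniformly because $w\in C^1(\overline{B_\ep^+})$. Convexity and closedness of $\{S:\lambda(S)\in\overline{\Gamma_{k-1}^+}\}$ then carry the inclusion through the averaging and the limits.

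\textbf{Comparison.} Your argument is shorter and avoids the parameter balancing. The paper's argument is pointwise and needs no convexity of the cone. It also uses only the viscosity supersolution property in $B_\ep^+$ plus one-sided $C^1$ regularity, so it does not depend on the hypothesis $w\in C^2_{\operatorname{loc}}(B_\ep^+)$.

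\textbf{A fix in Step 2.} Implement the metric dependence via frames, not by freezing $g_\Sigma(x_0)$. The error $(g_\Sigma(y)^{-1}-g_\Sigma(x_0)^{-1})T^{(t)}(y)$ is not small uniformly in $t$, because $T^{(t)}$ contains $\nabla^2 w$, which need not stay bounded as $t\to0$. Instead, take a smooth Euclidean-orthonormal frame $\{e_a(y)\}$ of $T_y\Sigma$. Since $\Sigma_t$ has normal $\nu(y)$ at $y+t\nu(y)$, the same frame is orthonormal for $T_{y+t\nu(y)}\Sigma_t$. Hence $\big((-A_w)(y+t\nu(y))(e_a(y),e_b(y))\big)_{ab}$ lies exactly in the cone, with no error term. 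The $y$-dependence of the frame is absorbed into the test tensor when you integrate by parts. The limit is then $-w_0(\nabla^2_\Sigma w_0-\alpha\,\mathrm{I\!I}_\Sigma)+\frac12(|\nabla_\Sigma w_0|^2+\alpha^2)g_\Sigma$, where $\mathrm{I\!I}_\Sigma(X,Y)=\langle \nabla_XY,\nu\rangle$ with $\nabla$ the Euclidean derivative. This fixes the sign in your Gauss formula (it should be $-(\nabla_\nu w)\,\mathrm{I\!I}_{\Sigma_t}$) and matches \eqref{17}.
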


\begin{proof}[Proof of Lemma \ref{21}]

Again we let $x_0$ and the associated coordinate system be as described previously. To prove the lemma, it is equivalent to show $\lambda((T_\alpha(0'))_{ab})\in\overline{\Gamma_k^+}$ for $\alpha = \partial_n w^\pm (0)$. We consider only the case $\alpha = \partial_n w^+(0)$, since the other case is essentially identical. In what follows, we will first take $\partial_n w^+(0) < \alpha < \partial_n w^-(0)$, which is possible by Lemma \ref{4}, and later in the proof we will take $\alpha\rightarrow \partial_n w^+(0)$. \medskip 

For parameters $0<\ep_2<\ep_1<1$, $0<\gamma<1$ and $\beta\in\mathbb{R}$, we define
\begin{align*}
\xi_{\alpha,\beta, \gamma}(x', x_n) = \ring{w}(x') - \gamma|x'|^2 + \alpha(x_n - \phi(x')) + \frac{1}{2}\beta(x_n - \phi(x'))^2 
\end{align*}
and
\begin{align*}
\Omega_{\ep_1,\ep_2} = \{(x',x_n): |x'|<\ep_1 \text{ and } 0<x_n - \phi(x')<\ep_2\}. 
\end{align*}
Then
\begin{align*}
\xi_{\alpha,\beta,\gamma}  - w = \big[\alpha - \partial_n w(x',\eta)\big](x_n-\phi(x')) - \gamma|x'|^2 + \frac{1}{2}\beta(x_n - \phi(x'))^2
\end{align*}
for some $\eta$ between $x_n$ and $\phi(x')$. \medskip 

The first main ingredient in the proof is the following claim, wherein we denote
\begin{align}\label{64}
\delta \defeq |\alpha - \partial_n w^+(0)| + |\operatorname{osc}_{|x'|<\ep_1,\, 0<x_n - \phi(x') < \ep_1} \partial_n w^+|. 
\end{align}

\noindent\textit{Claim 1:} Suppose $\ep_1$ and $|\alpha - \partial_n w^+(0)|$ are sufficiently small so that $\delta<1$, and let
\begin{align}\label{22}
\ep_2 = \gamma\ep_1^2
\end{align}
and 
\begin{align}\label{18}
\beta = -\frac{3\delta}{\ep_2}<0.
\end{align}
Then $\xi_{\alpha,\beta,\gamma}\leq w$ on $\partial\Omega_{\ep_1,\ep_2}$ with equality if and only if $x'=0$.\medskip 

\noindent\textit{Proof of Claim 1. } We first consider the portion of the boundary given by $\partial \Omega_{\ep_1,\ep_2}\cap \{|x'|=\ep_1\}$. On this set, since $\beta<0$ we have
\begin{align*}
\xi_{\alpha,\beta,\gamma} - w &  \leq \big|\alpha - \partial_n w(x',\eta)\big|(x_n - \phi(x')) - \gamma\ep_1^2 \nonumber \\
& \leq \delta\ep_2 - \gamma \ep_1^2
\end{align*}
for $\delta$ defined as above. By the choice of $\ep_2$ in \eqref{22} and the fact that $\delta<1$, we therefore obtain the inequality $\xi_{\alpha,\beta,\gamma} - w <0$ on $\partial \Omega_{\ep_1,\ep_2}\cap \{|x'|=\ep_1\}$.\medskip 

Next we consider the portion of the boundary given by $\partial \Omega_{\ep_1,\ep_2} \cap \{x_n - \phi(x') = \ep_2\}$. On this set we have
\begin{align*}
\xi_{\alpha,\beta,\gamma} - w & \leq \delta\ep_2 + \frac{1}{2}\beta\ep_2^2 \nonumber \\
& = \frac{1}{2}\ep_2(2\delta + \beta\ep_2),
\end{align*}
and hence by the choice of $\beta$ in \eqref{18} we obtain the inequality $\xi_{\alpha,\beta,\gamma} - w <0$ on $\partial \Omega_{\ep_1,\ep_2} \cap \{x_n - \phi(x') = \ep_2\}$. \medskip 

The remaining part of the boundary to consider is $\partial \Omega_{\ep_1,\ep_2} \cap \{x_n = \phi(x')\}$. But on this set it is clear that $\xi_{\alpha,\beta, \gamma}\leq w$ with equality if and only if $x' = 0$. This completes the proof of Claim 1. \medskip

\noindent\textit{Claim 2:} There exists $0<c \leq  \delta\ep_2 $ and an interior point $x_1\in\Omega_{\ep_1,\ep_2}$ such that
\begin{align*}
\sigma_k(\lambda(-A_{\xi_{\alpha,\beta, \gamma} - c}(x_1))) \geq 1, \quad \lambda(-A_{\xi_{\alpha,\beta, \gamma} - c}(x_1)) \in\Gamma_k^+
\end{align*}
where 
\begin{align*}
- A_{\xi_{\alpha,\beta, \gamma} - c}(x_1) =  -(\xi_{\alpha,\beta, \gamma} - c)(x_1)\nabla^2 \xi_{\alpha,\beta, \gamma}(x_1) + \frac{1}{2}|\nabla\xi_{\alpha,\beta, \gamma}|^2(x_1)I.
\end{align*}

\noindent\textit{Proof of Claim 2.} On the line $\{x' = 0\}$, we have for some $\eta$ between 0 and $x_n$
\begin{align*}
(\xi_{\alpha,\beta, \gamma} - w)(0',x_n) & = \big[\alpha - \partial_n w^+(0',\eta)\big]x_n + \frac{1}{2}\beta x_n^2 \nonumber \\
& \geq \big[\alpha - \partial_n w^+(0) - \operatorname{osc}_{0\leq t \leq x_n} \partial_n w^+(0',t)\big]x_n + \frac{1}{2}\beta x_n^2,
\end{align*}
which is strictly positive for all $x_n>0$ sufficiently small since we assume $\alpha>\partial_n w^+(0)$. On the other hand, $\xi_{\alpha,\beta, \gamma}-w$ cannot be too large in $\Omega_{\ep_1,\ep_2}$, since 
\begin{align*}
\xi_{\alpha,\beta, \gamma} - w & \leq \big|\big(\alpha - \partial_n w^+(x',\eta)\big)(x_n - \phi(x'))\big|  \nonumber \\
& \leq \delta\ep_2 . 
\end{align*}
Combining these two facts and Claim 1 (which tells us $\xi_{\alpha,\beta, \gamma} - w \leq 0$ on $\partial\Omega_{\ep_1,\ep_2}$), we see there exists $0<c \leq  \delta\ep_2 $ and an interior point $x_1\in\Omega_{\ep_1,\ep_2}$ such that
\begin{align*}
\xi_{\alpha,\beta, \gamma} - c \leq w \quad \text{in }\Omega_{\ep_1,\ep_2} \quad \text{and} \quad (\xi_{\alpha,\beta, \gamma}-c)(x_1) = w(x_1). 
\end{align*}
Claim 2 then follows immediately from the definition of a viscosity supersolution.\medskip

Roughly speaking, we would now like to take $\ep_1\rightarrow 0$, $\alpha\rightarrow \partial_n w^+(0)$ and $\gamma\rightarrow 0$ to conclude the proof of the lemma. This requires a careful error term analysis. In what follows, the implicit constants in big $O$ and little $o$ terms are universal as $\ep_1\rightarrow 0$, i.e.~they do not depend on any of $\ep_2, \alpha,\beta,\gamma,\delta$ (but may depend on $\phi$ and its derivatives). For example, a term is $O(\beta\ep_2)$ if it is bounded by $C\ep_2\beta$ as $\ep_1\rightarrow 0$ for some universal constant $C$. \medskip 

We first amend the computations in \eqref{11} to incorporate the new term involving $\gamma$. Note that at some places we simply write $o(1)$ for $\partial_a \phi$, but in others we use the fact that $\phi(0) = 0$ and $\nabla_{x'} \phi(0) = 0$ to assert more precisely that $\partial_a \phi = O(\ep_1^2) = O(\gamma^{-1}\ep_2)$ (for the last equality we have used \eqref{22}). With this in mind, for $a,b\in\{1,\dots,n-1\}$ we have
\begin{align}
\partial_a \xi_{\alpha,\beta,\gamma} & = \partial_a \ring{w} - \alpha \partial_a \phi - \beta(x_n - \phi)\partial_a \phi - 2\gamma x_a \nonumber \\
& = \partial_a \ring{w} + o(\alpha) + o(\beta\ep_2) + O(\gamma\ep_1)\nonumber \\
\partial_n \xi_{\alpha,\beta,\gamma} & = \alpha + \beta (x_n - \phi) \nonumber \\
& = \alpha + O(\beta\ep_2)\nonumber \\
\partial_{ab} \xi_{\alpha,\beta,\gamma} & = \partial_{ab} \ring{w} - \alpha \partial_{ab} \phi + \beta \partial_a \phi \partial_b \phi - \beta(x_n - \phi)\partial_{ab}\phi - 2\gamma\delta_{ab}\nonumber \\
& = \partial_{ab}\ring{w} - \alpha \partial_{ab}\phi - 2\gamma\delta_{ab} + o(\gamma^{-1}\beta\ep_2) + O(\beta\ep_2) \nonumber \\
\partial_{an} \xi_{\alpha,\beta,\gamma} & = -\beta \partial_a \phi \nonumber \\
& = O(\gamma^{-1}\beta\ep_2) \nonumber \\
\partial_{nn}\xi_{\alpha,\beta,\gamma} & = \beta.
\end{align}
Therefore we have
\begin{align}\label{61}
\nabla^2 \xi_{\alpha,\beta, \gamma} & = \left(
\begin{array}{c|c}
\partial_{ab}\ring{w} - \alpha \partial_{ab}\phi - 2\gamma\delta_{ab} + o(\gamma^{-1}\beta\ep_2) + O(\beta\ep_2) & O(\gamma^{-1}\beta\ep_2)\\
\hline
O(\gamma^{-1}\beta\ep_2)& \beta - 2\gamma
\end{array}
\right) \nonumber \\
& = \left(
\begin{array}{c|c}
\partial_{ab}\ring{w} - \alpha \partial_{ab}\phi - 2\gamma\delta_{ab} + O(\gamma^{-1}\beta\ep_2) & O(\gamma^{-1}\beta\ep_2)\\
\hline
O(\gamma^{-1}\beta\ep_2)& \beta - 2\gamma
\end{array}
\right),
\end{align}
where we have used the fact $\gamma<1$ to write $o(\gamma^{-1}\beta\ep_2) + O(\beta\ep_2) = O(\gamma^{-1}\beta\ep_2)$. We also have 
\begin{align}\label{62}
|\nabla \xi_{\alpha,\beta, \gamma}|^2  = |\nabla_\Sigma \ring{w}|^2 + \alpha^2 + E_2
\end{align}
where
\begin{align*}
E_2 = o(\alpha) + o(\beta\ep_2) + O(\gamma\ep_1) + \big[o(\alpha) + o(\beta\ep_2) + O(\gamma\ep_1)\big]^2 + O(\beta^2\ep_2^2).
\end{align*}
Finally, from the definition of $\xi_{\alpha,\beta, \gamma}$ and the bound $0<c\leq \delta\ep_2$ established in Claim 2, we have
\begin{align*}
\xi_{\alpha,\beta, \gamma} - c & = \ring{w}(x') - \gamma|x'|^2 + \alpha(x_n - \phi(x')) + \frac{1}{2}\beta(x_n - \phi(x'))^2 - c \nonumber \\
& =   \ring{w} + E_3
\end{align*}
where
\begin{align*}
E_3 = O(\gamma\ep_1^2) + O(\alpha\ep_2) + O(\beta\ep_2^2) + O(\delta\ep_2). 
\end{align*}
Combining \eqref{60}, \eqref{61} and \eqref{62}, we therefore have
\begin{align}\label{25}
- A_{\xi_{\alpha,\beta, \gamma} - c} & =  -(\xi_{\alpha,\beta, \gamma} - c)\nabla^2 \xi_{\alpha,\beta, \gamma}+ \frac{1}{2}|\nabla\xi_{\alpha,\beta, \gamma}|^2I\nonumber \\
& = -(\ring{w}+ E_3)\left(
\begin{array}{c|c}
\partial_{ab}\ring{w} - \alpha \partial_{ab}\phi - 2\gamma\delta_{ab} + O(\gamma^{-1}\beta\ep_2) & O(\gamma^{-1}\beta\ep_2)\\
\hline
O(\gamma^{-1}\beta\ep_2)& \beta - 2\gamma
\end{array}
\right) \nonumber \\
& \qquad + \frac{1}{2}\Big[|\nabla_\Sigma \ring{w}|^2 + \alpha^2 + E_2\Big]I.
\end{align}

Now, it is routine to see that for $\gamma>0$ fixed, all error terms in \eqref{25} \textit{with the possible exception of those involving a factor of $\beta\ep_2$} tend to zero as $\ep_1\rightarrow 0$. To deal with terms involving a factor of $\beta \ep_2$, we recall from \eqref{18} that $\beta \ep_2 = -3\delta$, and hence (by the definition of $\delta$ in \eqref{64}) such error terms can be made as small as desired by taking both $\ep_1$ small \textit{and} $\alpha$ close to $\partial_n w^+(0)$.\medskip 

After combining everything in \eqref{25} into one matrix, the entry in the bottom right corner is equal to $-(\ring{w}+E_3)(\beta -2\gamma) + \frac{1}{2}[|\nabla_\Sigma \ring{w}|^2 + \alpha^2 + E_2]$. By the reasoning in the previous paragraph, for $\ep_1$  sufficiently small and $\alpha$ is sufficiently close to $\partial_n w^+(0)$ this entry is positive at $x_1$. Recalling the differential inclusion $\lambda(-A_{\xi_{\alpha,\beta, \gamma} - c} (x_1))\in\Gamma_k^+$ established in Claim 2, it then follows from Cauchy's interlacing theorem that the top left $(n-1)\times(n-1)$ sub-matrix belongs to $\Gamma_{k-1}^+$:
\begin{align*}
\lambda\bigg((-\ring{w}+E_3)\big(\partial_{ab}\ring{w} - \alpha \partial_{ab} \phi - 2\gamma\delta_{ab} + O(\gamma^{-1}\beta\ep_2)\big) + \frac{1}{2}\Big[|\nabla_\Sigma \ring{w}|^2 + \alpha^2 + E_2\Big]\delta_{ab}\bigg) \in \Gamma_{k-1}^+
\end{align*}
at $x_1$. Still keeping $\gamma>0$ fixed, we now take $\alpha\rightarrow \partial_n w^+(0) \eqdef \alpha_*$ to obtain at $x_1$
\begin{align*}
\lambda\bigg((-\ring{w}+E_3)\big(\partial_{ab}\ring{w} - \alpha_* \partial_{ab} \phi - 2\gamma\delta_{ab} + O(\gamma^{-1}\beta\ep_2)\big) + \frac{1}{2}\Big[|\nabla_\Sigma \ring{w}|^2 + \alpha_*^2 + E_2\Big]\delta_{ab}\bigg) \in\overline{\Gamma_{k-1}^+}.
\end{align*}
We now take $\ep_1\rightarrow 0$, observing that in this limit, $x_1\rightarrow (0',0)$, $\delta\rightarrow 0$ (since we have already taken $\alpha\rightarrow \partial_n w^+(0)$) and hence $\beta\ep_2\rightarrow 0$. Therefore $E_2,E_3\rightarrow 0$, and hence at the origin we have 
\begin{align*}
\lambda\bigg(-\ring{w}\big(\partial_{ab}\ring{w} - \alpha_* \partial_{ab} \phi - 2\gamma\delta_{ab} \big) + \frac{1}{2}\Big[|\nabla_\Sigma \ring{w}|^2 + \alpha_*^2 \Big]\delta_{ab}\bigg) \in\overline{\Gamma_{k-1}^+}.
\end{align*}
Since $\gamma>0$ was arbitrary, we may finally take $\gamma\rightarrow 0$ to obtain at the origin
\begin{align*}
\lambda\bigg(-\ring{w}\big(\partial_{ab}\ring{w} - \alpha_* \partial_{ab} \phi\big) + \frac{1}{2}\Big[|\nabla_\Sigma \ring{w}|^2 + \alpha_*^2\Big]\delta_{ab}\bigg) \in\overline{\Gamma_{k-1}^+},
\end{align*}
as required.
\end{proof} 

\begin{proof}[Proof of Theorem \ref{A}]
	Theorem \ref{A} follows immediately from Lemmas \ref{24} and \ref{21}. 
\end{proof}

\subsection{Proof of Theorem \ref{D}}\label{s4}

\begin{proof}[Proof of Theorem \ref{D}]
	By Lemma \ref{4}, $\nabla_\nu w^+$ is the smaller of the two roots $\nabla_\nu w^{\pm}$ to \eqref{32}. Since the quadratic in \eqref{32} has positive leading order coefficient, the derivative of the LHS of \eqref{32} is therefore negative at $\alpha = \nabla_\nu^+ w$, i.e.~
	\begin{align*}
	w_0 H_\Sigma + (n-1) \nabla_\nu w^+ < 0 \quad \text{on }\Sigma\cap B_\ep^+. 
	\end{align*}
	As explained above Proposition \ref{D} in the introduction, this is precisely the assertion that the mean curvature of $\Sigma\cap B_\ep$ is negative with respect to $g|_{\overline{B_\ep^+}} = w_+^{-2}|dx|^2$. The proof is similar in $B_\ep^-$. 
\end{proof}

\section{Proof of Proposition \ref{B}}\label{s3}

In this section we prove Proposition \ref{B}. We fix a point $x_0$ and the associated coordinate system as described previously, and we always implicitly work sufficiently close to $\Sigma$ so that $\pi(x) = x_0$ for $x$ on the normal line to $\Sigma$ at $x_0$, where $\pi$ denotes the closest point projection onto $\Sigma$.

\begin{proof} 
We consider only the case $B_\ep^+$ (the other side is almost identical) and for ease of notation we drop any $+$ superscripts. Thus for $p\in (1,2)$ and $w_*$ not yet fixed, we denote
\begin{align*}
\bar{w}(x) = \widetilde{w}_0(\pi(x)) + w_1(\pi(x)) \mathrm{d}(x) + w_*(\pi(x)) \mathrm{d}(x)^p.
\end{align*}
Since $A_{\bar{w}} = \bar{w}\nabla^2\bar{w} - \frac{1}{2}|\nabla \bar{w}|^2 I$ and $\sigma_2(\lambda(-A_{\bar{w}})) = \frac{1}{2}(\operatorname{tr}(A_{\bar{w}}) - |A_{\bar{w}}|^2)$ we have
\begin{align}\label{30}
\sigma_2(\lambda(-A_{\bar{w}})) & = \frac{1}{2}\bigg(\Big(\bar{w}\Delta \bar{w} - \frac{n}{2}|\nabla \bar{w}|^2\Big)^2 - \Big\langle \bar{w}\nabla^2 \bar{w} - \frac{1}{2}|\nabla \bar{w}|^2 I, \bar{w}\nabla^2 \bar{w} - \frac{1}{2}|\nabla \bar{w}|^2 I\Big\rangle\bigg) \nonumber \\
& = \frac{1}{2}\bigg(\bar{w}^2\big((\Delta \bar{w})^2 - |\nabla^2 \bar{w}|^2\big) - (n-1)\bar{w}|\nabla \bar{w}|^2 \Delta \bar{w} + \frac{n(n-1)}{4}|\nabla \bar{w}|^4\bigg). 
\end{align}
In what follows, we always compute implicitly at a point $\bar{x} = (0', x_n)$. \medskip

\noindent\textbf{Step 1:} In this step we compute the quantities $|\nabla \bar{w}|^2$, $|\nabla^2 \bar{w}|^2$ and $(\Delta\bar{w})^2$, but postpone until Step 2 the expansion of terms involving purely tangential derivatives of $\bar{w}$. We first compute
\begin{align*}
\nabla \bar{w} &  = (\nabla_T \bar{w}, \partial_{n}\bar{w}) = (\nabla_T \bar{w}, w_1 + pw_* d^{p-1}),
\end{align*}
which implies 
\begin{align}\label{80}
|\nabla \bar{w}|^2 & = |\nabla_T \bar{w}|^2 + (\partial_{n}\bar{w})^2= |\nabla_T \bar{w}|^2 + w_1^2 + 2pw_1w_* d^{p-1}  + p^2 w_*^2 d^{2(p-1)} +  O(d).
\end{align}
In what follows, a subscript $T$ signifies we are computing only in the tangential coordinates (these do not generally coincide with quantities defined with respect to the induced metric on $\Sigma$ except at 0). To compute 
\begin{align*}
|\nabla^2 \bar{w}|^2 = |\nabla^2_T \bar{w}|^2 + 2|\nabla_T \partial_n \bar{w}|^2 + (\partial_{nn}^2 \bar{w})^2 \quad \text{and}\quad (\Delta\bar{w})^2 = (\Delta_T \bar{w})^2 + 2\Delta_T \bar{w}\, \partial_{nn}^2 \bar{w} + (\partial_{nn}^2 \bar{w})^2,
\end{align*}
observe that
\begin{align*}
\partial_{nn}^2 \bar{w} = p(p-1)w_*d^{p-2} 
\end{align*}
and
\begin{align*}
\nabla_T \partial_{n}\bar{w}  = \nabla_T(w_1 + pw_* d^{p-1}) & = \nabla_T w_1 + pd^{p-1}\nabla_T w_* + p(p-1)w_* d^{p-2}\nabla_T d \nonumber \\
& = \nabla_T w_1 + pd^{p-1}\nabla_T w_* ,
\end{align*}
where we have used $\nabla_T d = 0$ at $\bar{x}$ to reach the last line. It follows that
\begin{align}\label{81}
|\nabla^2 \bar{w}|^2 & = |\nabla^2_T \bar{w}|^2 + 2|\nabla_T w_1|^2+ 4pd^{p-1} \langle\nabla_T w_1, \nabla_T w_*\rangle + 2p^2 d^{2(p-1)}|\nabla_T w_*|^2 + (\partial_{nn}^2 \bar{w})^2 \nonumber \\
& = |\nabla^2_T \bar{w}|^2 + 2|\nabla_T w_1|^2 + (\partial_{nn}^2 \bar{w})^2 + O(d^{p-1}). 
\end{align}
and
\begin{align}\label{82}
(\Delta \bar{w})^2 = (\Delta_T\bar{w})^2 + 2p(p-1)w_* d^{p-2}\Delta_T \bar{w} + (\partial_{nn}^2 \bar{w})^2.
\end{align}

\noindent\textbf{Step 2:} In this step we first expand out the purely tangential derivatives of $\bar{w}$ appearing in \eqref{80}, \eqref{81} and \eqref{82}, and then piece these computations together to obtain expressions for $\bar{w}^2\big((\Delta \bar{w})^2 - |\nabla^2 \bar{w}|^2\big)$, $\bar{w}|\nabla \bar{w}|^2 \Delta \bar{w}$ and $|\nabla \bar{w}|^4$ (which constitute the right hand side of \eqref{30}).\medskip 

Using once again the fact that $\nabla_T d = 0$ at $\bar{x}$, we first compute
\begin{align}\label{83}
\nabla_T \bar{w} & = \nabla_T \widetilde{w}_0 + d\nabla_T w_1 + d^p \nabla_T w_* + (w_1 + pd^{p-1}w_* )\nabla_T d \nonumber \\
& = \nabla_T \widetilde{w}_0 + d\nabla_T w_1 + d^p \nabla_T w_*,
\end{align}
which implies
\begin{align}\label{87}
|\nabla_T \bar{w}|^2 = |\nabla_T \widetilde{w}_0|^2 + O(d). 
\end{align}
Using \eqref{83} we also see
\begin{align*}
\nabla_T^2 \bar{w} & = \nabla_T^2 \widetilde{w}_0 + d\nabla_T^2 w_1 + d^p \nabla_T^2 w_* + (w_1 + pd^{p-1}w_* )\nabla_T^2 d \nonumber \\
& =  \nabla_T^2 \widetilde{w}_0 + d\nabla_T^2 w_1 + d^p \nabla_T^2 w_* + (w_1 + pd^{p-1}w_*)(-\nabla_T^2 \phi + O(d)),
\end{align*}
where the $O(d)$ term in the final line follows from the fact that the hypersurfaces parallel to $\Sigma$ vary smoothly in a sufficiently small normal neighbourhood of $\Sigma$. It follows that
\begin{align}\label{84}
\Delta_T \bar{w}  = \Delta_T \widetilde{w}_0 - w_1\Delta_T \phi  - pd^{p-1}w_*\Delta_T \phi + O(d), 
\end{align}
\begin{align}\label{85}
(\Delta_T \bar{w})^2 = (\Delta_T \widetilde{w}_0 - w_1\Delta_T \phi)^2 + O(d^{p-1})
\end{align}
and
\begin{align}\label{86}
|\nabla_T^2 \bar{w}|^2 = |\nabla^2_T \widetilde{w}_0 - w_1\nabla^2_T \phi|^2 + O(d^{p-1}). 
\end{align}

Substituting \eqref{87} into \eqref{80}, \eqref{84} and \eqref{85} into \eqref{82}, and \eqref{86} into \eqref{81}, we therefore see that
\begin{align}\label{29}
 \bar{w}^2\big((\Delta \bar{w})^2 - |\nabla^2 \bar{w}|^2\big) &  = \big(\widetilde{w}_0^2 + O(d) \big) \bigg(2p(p-1)w_* d^{p-2} \big[\Delta_T \widetilde{w}_0 - w_1\Delta_T \phi  - pd^{p-1}w_*\Delta_T \phi \big]  \nonumber \\
& \quad + (\Delta_T \widetilde{w}_0 - w_1\Delta_T \phi)^2   - |\nabla^2_T \widetilde{w}_0 - w_1\nabla^2_T \phi|^2  - 2|\nabla_T w_1|^2  + O(d^{p-1})\bigg),
\end{align}
\begin{align}\label{28}
\bar{w}|\nabla \bar{w}|^2 \Delta \bar{w} & = \big(\widetilde{w}_0+ O(d) \big)\bigg(|\nabla_T \widetilde{w}_0|^2 + w_1^2 + 2pw_1w_* d^{p-1} + p^2 w_*^2 d^{2(p-1)}+ O(d)\bigg)\cdot \nonumber \\
& \quad \, \bigg(p(p-1)w_*d^{p-2}  + \Delta_T \widetilde{w}_0 - w_1\Delta_T \phi  - pd^{p-1}w_*\Delta_T \phi +  O(d)\bigg)
\end{align}
and
\begin{align}\label{39}
|\nabla \bar{w}|^4 & = \bigg(|\nabla_T \widetilde{w}_0|^2 + w_1^2 + O(d^{p-1})\bigg)^2.
\end{align}

\noindent\textbf{Step 3:} In this step we use \eqref{29}--\eqref{39} to collect terms appearing in \eqref{30} of the same order. We also show that when $p=\frac{3}{2}$, we may uniquely (and explicitly) determine $w_*$ such that \eqref{69} is satisfied if and only if \eqref{65'} holds. \medskip 

The lowest order expression in $d$ appearing in \eqref{29}--\eqref{39} is $d^{p-2}$. Such expressions only arise in \eqref{29} and \eqref{28}, and thus after recalling \eqref{30} we see by inspection that corresponding terms in $\sigma_2(\lambda(-A_{\bar{w}}))$ are 
\begin{align*}
p(p-1)\widetilde{w}_0w_*d^{p-2}\bigg(\widetilde{w}_0\Delta_T \widetilde{w}_0 - \widetilde{w}_0w_1\Delta_T \phi - \frac{n-1}{2} (|\nabla_T \widetilde{w}_0|^2 + w_1^2)\bigg),
\end{align*}
which is equal to zero by \eqref{88}.\medskip

Let us now take $p=\frac{3}{2}$, so that $d^{2p-3} = 1$. Then following $d^{p-2}$, the next lowest order terms are those which are independent of $d$, namely $d^{2p-3}$ \textit{and} those terms that were already independent of $d$ before taking $p=\frac{3}{2}$. In \eqref{29}, these terms are collectively 
\begin{align*}
& \widetilde{w}_0^2\Big(\!\!-\!2p(p-1)d^{p-2}w_*\!\cdot\! pd^{p-1}w_* \Delta_T \phi + (\Delta_T \widetilde{w}_0 - w_1\Delta_T \phi)^2   - |\nabla_T^2\widetilde{w}_0 - w_1\nabla^2_T \phi|^2 - 2|\nabla_T w_1|^2  \Big), 
\end{align*}
in \eqref{28} they are
\begin{align*}
\widetilde{w}_0 \cdot 2pw_1 w_* d^{p-1}\cdot p(p-1)w_* d^{p-2} + \widetilde{w}_0(|\nabla_T \widetilde{w}_0|^2 + w_1^2)(\Delta_T \widetilde{w}_0 - w_1\Delta_T \phi ), 
\end{align*}
and in \eqref{39} they are $(|\nabla_T \widetilde{w}_0|^2 + w_1^2)^2$. Thus, the corresponding terms in $\sigma_2(\lambda(-A_{\bar{w}}))$ are
\begin{align}\label{41}
\frac{1}{2}\bigg[&\widetilde{w}_0^2\Big((\Delta_T \widetilde{w}_0 - w_1\Delta_T \phi)^2 - |\nabla_T^2 \widetilde{w}_0 - w_1\nabla^2_T \phi|^2 - 2|\nabla_T w_1|^2 - 2p^2(p-1)w_*^2 \Delta_T \phi \Big) \nonumber \\
&  -(n-1)\Big(\widetilde{w}_0(|\nabla_T \widetilde{w}_0|^2 + w_1^2)(\Delta_T \widetilde{w}_0 - w_1\Delta_T \phi )  + 2p^2(p-1)\widetilde{w}_0 w_1 w_*^2\Big) \nonumber \\
& + \frac{n(n-1)}{4}\big(|\nabla_T\widetilde{w}_0|^2 + w_1^2\big)^2\bigg]. 
\end{align}
Since the quantity in \eqref{41} is precisely $\lim_{d\rightarrow 0}\sigma_2(\lambda(-A_{\bar{w}}))$, we wish to set it equal to 1 and solve for $w_*$. Denoting $X = \nabla^2_T \widetilde{w}_0- w_1\nabla^2_T \phi$, the equation reads
\begin{align}\label{42}
1 & = \frac{1}{2}\widetilde{w}_0^2\Big(\operatorname{tr}(X)^2 - |X|^2 - 2|\nabla_T w_1|^2 - 2p^2(p-1)w_*^2\Delta_T \phi\Big) -\frac{n-1}{2}\widetilde{w}_0(|\nabla_T \widetilde{w}_0|^2 + w_1^2)\operatorname{tr}(X)  \nonumber \\
& \quad - (n-1)p^2(p-1)\widetilde{w}_0 w_1 w_*^2  + \frac{n(n-1)}{8}\big(|\nabla_T\widetilde{w}_0|^2 + w_1^2\big)^2.
\end{align}
Next observe by \eqref{32} that $|\nabla_T \widetilde{w}_0|^2 + w_1^2 = \frac{2}{n-1}\widetilde{w}_0\operatorname{tr}(X)$. Substituting this into \eqref{42} and simplifying we get
\begin{align}\label{43}
1 & = \frac{1}{2}\widetilde{w}_0^2\bigg(\frac{1}{n-1}\operatorname{tr}(X)^2 - |X|^2 - 2|\nabla_T w_1|^2\bigg) - p^2(p-1)\widetilde{w}_0 w_*^2\Big(\widetilde{w}_0\Delta_T \phi + (n-1)w_1\Big) \nonumber \\
& = \frac{1}{2}\widetilde{w}_0^2\bigg(-|\mathring{X}|^2 - 2|\nabla_T w_1|^2\bigg) - p^2(p-1)\widetilde{w}_0 w_*^2\Big(\widetilde{w}_0\Delta_T \phi + (n-1)w_1\Big),
\end{align}
where $\mathring{X}$ is the trace-free part of $X$. This has real solutions for $w_*$ if and only if
\begin{align*}
\widetilde{w}_0\Delta_T \phi + (n-1)w_1 < 0,
\end{align*}
which is precisely \eqref{65'} since, in our principal coordinate system, $\Delta_T\phi(0)$ is the mean curvature of $\Sigma$ at the origin.\medskip

To uniquely determine $w_*$, we must determine which root to take in the equation \eqref{43} for $w_*^2$ in the case that it admits real solutions. For this we use the fact that $\lambda(-A_{\bar{w}})\in\Gamma_1^+$ near $\Sigma$:
\begin{align*}
0  < \sigma_1(\lambda(-A_{\bar{w}})) = -\bar{w}\Delta \bar{w} + \frac{n}{2}|\nabla \bar{w}|^2  = -\big(\widetilde{w}_0 + O(d)\big)\big(p(p-1)w_*d^{p-2} + O(d)\big) + O(1).
\end{align*}
As $d\rightarrow 0$ the term $-p(p-1)\widetilde{w}_0 w_*d^{p-2}$ dominates and therefore $-p(p-1)\widetilde{w}_0w_*$ must be positive. This forces $w_*<0$, i.e.~we take the negative root in \eqref{43}. We have therefore shown that if $p=\frac{3}{2}$, then $w_*$ exists if and only if \eqref{65'} holds, and moreover $w_*$ is negative and unique. In particular, we have proved the backward implication stated in Proposition \ref{B}. \medskip 

\noindent\textbf{Step 4:} In this step we complete the proof of Proposition \ref{B} by proving the forward implication. Let us suppose first for a contradiction that $p<\frac{3}{2}$ and there exists $w_* \not \equiv 0$ such that \eqref{69} is satisfied. Since $d^{2p-3}\rightarrow+\infty$ in this instance, \eqref{69} implies that the coefficient of $d^{2p-3}$ in $\sigma_2(\lambda(-A_{\hat{w}}))$ must vanish. By the computations in Step 3, this coefficient is 
\begin{align*}
-p^2(p-1)\widetilde{w}_0w_*^2\big(\widetilde{w}_0\Delta_T \phi + (n-1)w_1\big). 
\end{align*}
Since $w_* \not\equiv 0$, it must therefore be the case that $\widetilde{w}_0\Delta_T \phi + (n-1)w_1$ vanishes at some point in $\Sigma\cap B_\ep$, contradicting the inequality \eqref{65'}.\medskip

On the other hand, by inspecting \eqref{29}--\eqref{39} we see there is no value of $p\in (\frac{3}{2},2)$ which would introduce terms independent of $d$ in $\sigma_2(\lambda(-A_{\bar{w}}))$. Then the computations in Step 3 show that the only terms independent of $d$ in $\sigma_2(\lambda(-A_{\bar{w}}))$ are $\frac{1}{2}\widetilde{w}_0^2(-|\mathring{X}|^2 - 2|\nabla_T w_1|^2)$, and thus the condition \eqref{69} implies 
\begin{align*}
1 = \frac{1}{2}\widetilde{w}_0^2\bigg(-|\mathring{X}|^2 - 2|\nabla_T w_1|^2\bigg),
\end{align*}
which is clearly not possible. 
\end{proof}


\footnotesize
\begin{thebibliography}{10}

\bibitem{AILA18}
{\sc P.~T. Allen, J.~Isenberg, J.~M. Lee, and I.~S. Allen}, {\em Weakly
  asymptotically hyperbolic manifolds}, Comm. Anal. Geom., 26 (2018),
  pp.~1--61.

\bibitem{ACF92}
{\sc L.~Andersson, P.~T. Chru\'{s}ciel, and H.~Friedrich}, {\em On the
  regularity of solutions to the {Y}amabe equation and the existence of smooth
  hyperboloidal initial data for {E}instein's field equations}, Comm. Math.
  Phys., 149 (1992), pp.~587--612.

\bibitem{Av82}
{\sc P.~Aviles}, {\em A study of the singularities of solutions of a class of
  nonlinear elliptic partial differential equations}, Comm. Partial
  Differential Equations, 7 (1982), pp.~609--643.

\bibitem{AM88}
{\sc P.~Aviles and R.~C. McOwen}, {\em Complete conformal metrics with negative
  scalar curvature in compact {R}iemannian manifolds}, Duke Math. J., 56
  (1988), pp.~395--398.

\bibitem{CLL23}
{\sc B.~Z. Chu, Y.~Y. Li, and Z.~Li}, {\em Liouville theorems for conformally
  invariant fully nonlinear equations {I}}, arXiv:2311.07542 [math.AP],
  (2023).

\bibitem{DN23}
{\sc J.~A.~J. Duncan and L.~Nguyen}, {\em The $\sigma_k$-{L}oewner--{N}irenberg
  problem on {R}iemannian manifolds for $k<\frac{n}{2}$}, Anal. PDE, 18 (2025),
  pp.~2203--2240.

\bibitem{DN25b}
\leavevmode\vrule height 2pt depth -1.6pt width 23pt, {\em The
  {$\sigma_k$}-{L}oewner-{N}irenberg problem on {R}iemannian manifolds for
  {$k=\frac{n}{2}$} and beyond}, J. Funct. Anal., 290 (2026), pp.~Paper No.
  111306, 36.

\bibitem{DN25a}
\leavevmode\vrule height 2pt depth -1.6pt width 23pt, {\em The fully nonlinear
  {L}oewner-{N}irenberg problem: {L}iouville theorems and counterexamples to
  local boundary estimates}, https://arxiv.org/abs/2507.16383,  (submitted
  2025).

\bibitem{FW20}
{\sc H.~Fang and W.~Wei}, {\em A {$\sigma_2$} {P}enrose inequality for
  conformal asymptotically hyperbolic 4-discs}, Adv. Math., 402 (2022),
  pp.~Paper No. 108365, 33.

\bibitem{Finn98}
{\sc D.~L. Finn}, {\em Existence of positive solutions to {$\Delta_gu=u^q+Su$}
  with prescribed singularities and their geometric implications}, Comm.
  Partial Differential Equations, 23 (1998), pp.~1795--1814.

\bibitem{GLN18}
{\sc M.~d.~M. Gonz\'{a}lez, Y.~Y. Li, and L.~Nguyen}, {\em Existence and
  uniqueness to a fully nonlinear version of the {L}oewner-{N}irenberg
  problem}, Commun. Math. Stat., 6 (2018), pp.~269--288.

\bibitem{GW17}
{\sc A.~R. Gover and A.~Waldron}, {\em Renormalized volume}, Comm. Math. Phys.,
  354 (2017), pp.~1205--1244.

\bibitem{Gr17}
{\sc C.~R. Graham}, {\em Volume renormalization for singular {Y}amabe metrics},
  Proc. Amer. Math. Soc., 145 (2017), pp.~1781--1792.

\bibitem{GG21}
{\sc C.~R. Graham and M.~J. Gursky}, {\em Chern-{G}auss-{B}onnet formula for
  singular {Y}amabe metrics in dimension four}, Indiana Univ. Math. J., 70
  (2021), pp.~1131--1166.

\bibitem{Guan08}
{\sc B.~Guan}, {\em Complete conformal metrics of negative {R}icci curvature on
  compact manifolds with boundary}, Int. Math. Res. Not. IMRN,  (2008).
\newblock Art. ID rnn 105, 25pp.

\bibitem{GSW11}
{\sc M.~J. Gursky, J.~Streets, and M.~Warren}, {\em Existence of complete
  conformal metrics of negative {R}icci curvature on manifolds with boundary},
  Calc. Var. PDE, 41 (2011), pp.~21--43.

\bibitem{GV03b}
{\sc M.~J. Gursky and J.~A. Viaclovsky}, {\em Fully nonlinear equations on
  {R}iemannian manifolds with negative curvature}, Indiana Univ. Math. J., 52
  (2003), pp.~399--419.

\bibitem{HJS20}
{\sc Q.~Han, X.~Jiang, and W.~Shen}, {\em The {L}oewner-{N}irenberg problem in
  cones}, J. Funct. Anal., 287 (2024), p.~110566.

\bibitem{HS20}
{\sc Q.~Han and W.~Shen}, {\em The {L}oewner-{N}irenberg problem in singular
  domains}, J. Funct. Anal., 279 (2020), pp.~108604, 43.

\bibitem{HN23}
{\sc J.~Hogg and L.~Nguyen}, {\em Existence and uniqueness for the non-compact
  {Y}amabe problem of negative curvature type}, Anal. Theory Appl., 40 (2024),
  pp.~57--91.

\bibitem{Jia21}
{\sc X.~Jiang}, {\em Boundary expansion for the {L}oewner-{N}irenberg problem
  in domains with conic singularities}, J. Funct. Anal., 281 (2021), pp.~Paper
  No. 109122, 41.

\bibitem{Li22}
{\sc G.~Li}, {\em Two flow approaches to the {L}oewner-{N}irenberg problem on
  manifolds}, J. Geom. Anal., 32 (2022), pp.~Paper No. 7, 30.

\bibitem{LN20b}
{\sc Y.~Y. Li and L.~Nguyen}, {\em Solutions to the
  {$\sigma_k$}-{L}oewner-{N}irenberg problem on annuli are locally {L}ipschitz
  and not differentiable}, J. Math. Study, 54 (2021), pp.~123--141.

\bibitem{LN20}
\leavevmode\vrule height 2pt depth -1.6pt width 23pt, {\em Existence and
  uniqueness of {G}reen's functions to nonlinear {Y}amabe problems}, Comm. Pure
  Appl. Math., 76 (2023), pp.~1554--1607.

\bibitem{LNW18}
{\sc Y.~Y. Li, L.~Nguyen, and B.~Wang}, {\em Comparison principles and
  {L}ipschitz regularity for some nonlinear degenerate elliptic equations},
  Calc. Var. Partial Differential Equations, 57 (2018), pp.~Paper No. 96, 29.

\bibitem{LNX22}
{\sc Y.~Y. Li, L.~Nguyen, and J.~Xiong}, {\em Regularity of viscosity solutions
  of the {$\sigma_k$}-{L}oewner-{N}irenberg problem}, Proc. Lond. Math. Soc.
  (3), 127 (2023), pp.~1--34.

\bibitem{LN74}
{\sc C.~Loewner and L.~Nirenberg}, {\em Partial differential equations
  invariant under conformal or projective transformations}, in Contributions to
  analysis (a collection of papers dedicated to {L}ipman {B}ers), Academic
  Press, New York-London, 1974, pp.~245--272.

\bibitem{Maz91}
{\sc R.~Mazzeo}, {\em Regularity for the singular {Y}amabe problem}, Indiana
  Univ. Math. J., 40 (1991), pp.~1277--1299.

\bibitem{MP03}
{\sc R.~Mazzeo and F.~Pacard}, {\em Poincar\'{e}-{E}instein metrics and the
  {S}chouten tensor}, Pacific J. Math., 212 (2003), pp.~169--185.

\bibitem{Sav07}
{\sc O.~Savin}, {\em Small perturbation solutions for elliptic equations},
  Comm. Partial Differential Equations, 32 (2007), pp.~557--578.

\bibitem{Ver81}
{\sc L.~V\'{e}ron}, {\em Singularit\'{e}s \'{e}liminables d'\'{e}quations
  elliptiques non lin\'{e}aires}, J. Differential Equations, 41 (1981),
  pp.~87--95.

\bibitem{Wu24}
{\sc J.~Wu}, {\em Regularity of viscosity solutions of the
  $\sigma_k$-{Y}amabe-type {P}roblem for $k>n/2$},
  https://arxiv.org/abs/2407.08300,  (2024).

\bibitem{Yuan22}
{\sc R.~Yuan}, {\em The partial uniform ellipticity and prescribed problems on
  the conformal classes of complete metrics}, https://arxiv.org/abs/2203.13212,
   (2022).

\bibitem{Yuan24}
\leavevmode\vrule height 2pt depth -1.6pt width 23pt, {\em Notes on conformal
  metrics of negative curvature on manifolds with boundary}, J. Math. Study, 57
  (2024), pp.~373--378.

\end{thebibliography}
\end{document}